\titleformat*{\section}{\large\bfseries}
\newtheorem{theorem}{Theorem}[section]
\newtheorem*{rep@theorem}{\rep@title}
\newcommand{\newreptheorem}[2]{%
\newenvironment{rep#1}[1]{%
 \def\rep@title{#2 \ref{##1}}%
 \begin{rep@theorem}}%
 {\end{rep@theorem}}}
\newtheorem{lemma}[theorem]{Lemma}
\theoremstyle{definition}
\newtheorem{prop}[theorem]{Proposition}
\newtheorem{mydef}[theorem]{Definition}
\newtheorem{rem}[theorem]{Remark}
\newtheorem{exa}[theorem]{Example}
\newtheorem{lma}[theorem]{Lemma}
\title{Faithful tropicalizations of elliptic curves using minimal models and inflection points}
\author{Paul Alexander Helminck}
\affil{University of Bremen,\\
ALTA, Institute for Algebra, Geometry, Topology and their Applications}
\date{September 29, 2019}
\begin{document}
\maketitle

\begin{figure}[h!]

\centering
\includegraphics[width=9cm]{{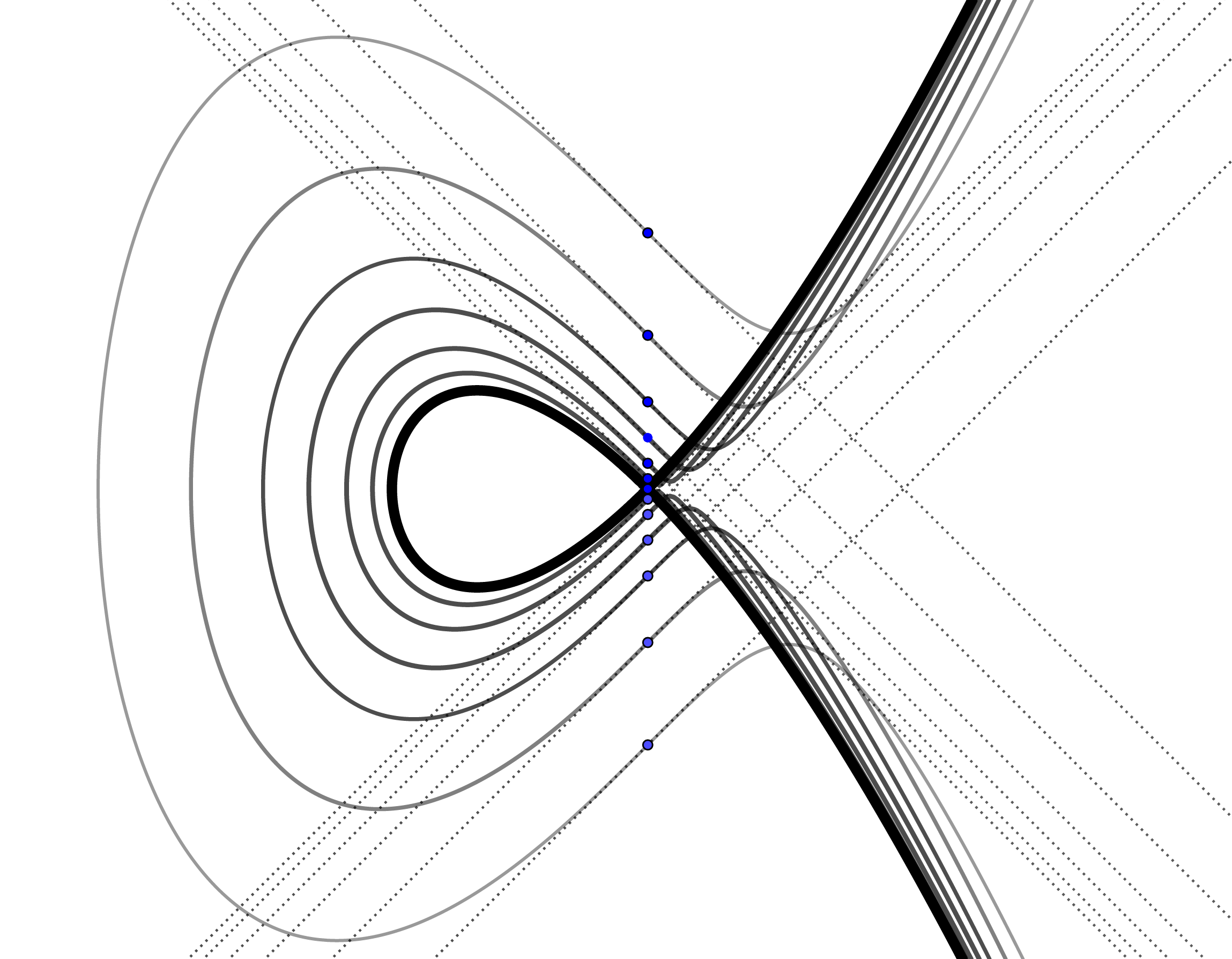}}
\caption{\label{PlaatjeDegTrop3} {\small{A family of elliptic curves degenerating to a nodal curve. Every member in the family has two blue {\it{inflection points}} for which the corresponding dotted tangent lines intersect the curve only at that point. 
As the curves get closer to the nodal curve, the tangent lines converge to the two distinct tangent directions of the singular curve.}}} 
\end{figure}

\begin{abstract}
We give an elementary proof of the fact that any elliptic curve $E$ over an algebraically closed non-archimedean field $K$ with residue characteristic $\neq{2,3}$ and with $v(j(E))<0$ admits a tropicalization that contains a cycle of length $-v(j(E))$. 
We first define an adapted form of minimal models over non-discrete valuation rings and we recover several well-known theorems from the discrete case. Using these, we create an explicit family of marked elliptic curves $(E,P)$, where $E$ has multiplicative reduction and $P$ is an inflection point that reduces to the singular point on the reduction of $E$.  We then follow the strategy as in \cite[Theorem 6.2]{BPR11} and construct an embedding such that its tropicalization contains a cycle of length $-v(j(E))$. We call this a numerically faithful tropicalization. A key difference between this approach and the approach in \cite{BPR11} is that we do not require any of the analytic theory on Berkovich spaces such as the {\it{Poincar\'{e}-Lelong formula}} or \cite[Theorem 5.25]{BPR11} to establish the numerical faithfulness of this tropicalization. 
\end{abstract}

\section{Introduction}\label{Introduction}

In this paper, we study the tropicalization of one-parameter families of algebraic curves. The tropicalization process takes such a one-parameter family and assigns a piecewise-linear limit to it. Over the complex numbers, this limit  has been known for some time as the {\it{logarithmic limit set}} or the  {\it{nonarchimedean amoeba}} assigned to the family, see \cite{Bergman1971} and \cite[Definition 1.1.1]{Einsiedler2006}.  This process has proven to be interesting, because the piecewise limit can retain much of the geometry of each of its members if the family is chosen carefully enough. In this paper, the families we will be aiming for are those that give rise to {\it{faithful tropicalizations}}. In the special case of a family of elliptic curves, we will show that we can modify the family such that the new family defines a faithful tropicalization.

Algebraically, the notion of a one-parameter family of complex curves can be viewed as a curve defined over the field of Puiseux series $K$. These Puiseux series are power series over $\mathbb{C}$ in one variable $t$ of the form 
\begin{equation}
f(t)=c_{1}t^{a_{1}}+c_{2}t^{a_{2}}+...
\end{equation} 
where the $c_{i}$ are nonzero complex numbers for all $i$, and $a_{1}<a_{2}<...$ are rational numbers that have a common denominator. 
For instance, the formal power series 
\begin{equation}
\dfrac{1}{(1-t^{1/2})}=1+t^{1/2}+t+t^{3/2}+t^{2}+...
\end{equation} 
is contained in $K$. An example of an algebraic curve over this field is then given by the equation 
\begin{equation}
y^2=x^3+(x-t)^2. 
\end{equation} We view this curve over $K$ as giving a family of curves by evaluating $t$ at complex numbers $t_{0}\in\mathbb{C}$. The corresponding picture over $\mathbb{R}$ can be found in Figure \ref{PlaatjeDegTrop3} for some small values of $t$. 

We are now interested in the degeneration of this family to $t=0$, where the curve becomes a singular curve. To study this process, we consider the valuation map $v:{K}^{*}\rightarrow{\mathbb{Q}}$, which sends a power series $f$ to the lowest power of $t$ in its power series expansion. For instance, we have $v(t^{-1/2})=-1/2$ and $v(t^2+t^3)=2$.  If we now apply this map to points on an algebraic curve with values in ${K}^{*}$, then we obtain our first notion of a {\it{tropicalization}}, see Section \ref{Tropicalizations} for the general construction. The reader can find an example of a tropicalization in Figure \ref{Plaatje1}. In this case, we have tropicalized a family of {\it{elliptic curves}}, which are complex curves whose topological genus is one. As the reader can see from Figure \ref{Plaatje1}, this tropicalization contains a subgraph of Betti number one. This does not happen for all families of elliptic curves: it can only happen for families that satisfy the technical condition $v(j(E))<0$, where $j(E)$ is the $j$-invariant associated to the family of elliptic curves. Our goal in this paper is to modify a given family of elliptic curves such that its tropicalization contains a cycle of length $-v(j(E))$. We call this a {\it{numerically faithful tropicalization}}. We  now state our main result: 
\begin{theorem}\label{MainTheorem}
Let $E$ be an elliptic curve over $K$ with $v(j(E))<0$, where $j(E)$ is the $j$-invariant of $E$. Then there exists an embedding $E\rightarrow{\mathbb{P}^{2}}$ such that its tropicalization contains a cycle of length $-v(j(E))$. 
\end{theorem}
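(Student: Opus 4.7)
The plan is to apply the adapted minimal-model framework developed in the preceding sections to produce a concrete Weierstrass model with nodal reduction, then re-embed the curve using an inflection point that reduces to the node, and finally read off the cycle from the Newton polygon of the new defining equation, following the strategy of \cite[Theorem 6.2]{BPR11} but avoiding any Berkovich input.

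First, since $v(j(E))<0$ and the residue characteristic is $\neq 2,3$, the minimal-model theory established earlier in the paper produces a Weierstrass equation $y^2 = x^3 + ax + b$ for $E$ over the valuation ring $R$ of $K$ whose special fiber $E_s$ is a nodal plane cubic, and whose discriminant satisfies $v(\Delta) = -v(j(E))$.

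Second, I would locate an inflection point of the Weierstrass embedding that reduces to the singular point of $E_s$. The nine inflection points of a smooth plane cubic are exactly the nine $3$-torsion points of $E$, and since the residue characteristic is $\neq 3$, we have $E[3] \cong (\mathbb{Z}/3\mathbb{Z})^2$. On the other hand, the smooth locus of $E_s$ is isomorphic to $\mathbb{G}_m$ and so contains only three $3$-torsion points. Hence six of the nine inflection points must specialize to the node, and I fix one such point $P$.

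Third, I would translate $P$ to the point at infinity, producing a new Weierstrass model $F(x,y)=0$ for $E$ in $\mathbb{P}^2$ whose coefficients have valuations determined explicitly by the coordinates of $P$ and the original equation. Because $P$ is an inflection point reducing to the node, its tangent line meets $E_s$ only at the singular point and with multiplicity three, which forces the initial form of $F$ at the distinguished vertex of its Newton polygon to be a pure cube matching one of the branches of the node. This is the geometric content of the choice of $P$, and is what replaces the Poincar\'e--Lelong input used in \cite{BPR11}.

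Finally, I would compute the tropicalization of $V(F)\subset\mathbb{P}^2$ from the regular subdivision of the Newton polygon induced by the coefficient valuations, and show that this subdivision produces a bounded region whose boundary realizes a cycle of length exactly $-v(j(E))$ in the tropical curve. The main obstacle is this last step: one must track the valuations of the coefficients of $F$ after the translation carefully enough to verify that the cycle length matches $v(\Delta) = -v(j(E))$ and is not shortened by any edge collapse or tropical-multiplicity phenomenon. Here the defining property of $P$, namely that it is an inflection point reducing to the node, is essential, since a generic point reducing to the node would not yield edges with the slopes required to realize the full cycle length.
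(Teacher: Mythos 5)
Your outline follows the paper's strategy closely, and the embedding you propose (the linear system $|3P|$, placing the inflection point $P$ at infinity) is the same abstract map the paper uses, just written in different coordinates: since $P$ is $3$-torsion, $-P=2P$, so $3P$ and $P+(-P)+\mathcal{O}$ are linearly equivalent, and the latter is the divisor underlying the paper's basis $f=x^2/(y-a'(x-b))$, $g=x^2/(y+a'(x-b))$, $1$. Concretely, after translating $P$ to $x=0$ your Weierstrass model in the variable $u=y+a'(x-b)$ is $u^2-2a'xu+2a'bu-x^3=0$ (Newton polygon $\mathrm{conv}\{(0,1),(0,2),(3,0)\}$ with interior point $(1,1)$), and the substitution $(f,g)=(u/x,\,x^2/u)$ turns this into the paper's $fg^2+2a'fg-2a'b-f^2g=0$ (Newton polygon $\mathrm{conv}\{(0,0),(2,1),(1,2)\}$). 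That substitution is a unimodular monomial change of torus coordinates, so the two tropicalizations are isometric, each with a cycle of lattice length $3v(b)$. The route is therefore sound in principle, though the paper's choice has the advantage of a symmetric honeycomb polygon. (Also note that your step-two count implicitly uses Proposition \ref{NoTorsion}, that $E_1(K)[3]=0$, to make the reduction map injective on $E_0(K)[3]$; this should be stated.)

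The genuine gap is precisely the step you defer to ``track the valuations of the coefficients carefully.'' This is the heart of the proof and is exactly what Lemmas \ref{ExplicitFamily1} and \ref{LemmaValuation} supply: one first uses the flex condition (that the tangent at $P$ meets $\overline{E}$ only at the node, to order three) to force the minimal model into the normal form $y^2=x^3+a(x-b)^2$ after moving $P$ to $x=0$, then shows $v(a)=0$ (else additive reduction, contradicting $v(j(E))<0$) and $v(b)>0$ (else $P$ would not reduce to the singular point). Only then are the Newton-polygon heights $(0,0,v(b),0)$ visible and is the identity $v(j(E))=-3v(b)$ a one-line computation from the $j$-invariant formula. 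Without an equivalent lemma, your fourth step is a placeholder rather than an argument. One concrete pitfall to flag explicitly: you must retain the $a_1xu$ and $a_3u$ terms of the long Weierstrass form. If you complete the square, as one reflexively does in residue characteristic $\neq 2$, the interior $(1,1)$ monomial vanishes and the cycle collapses; this is exactly the phenomenon recorded in Example \ref{WeierstrassTropExample}, where a short Weierstrass equation never tropicalizes to a cycle.
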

\begin{figure}[h!]
\centering
\includegraphics[width=7.5cm]{{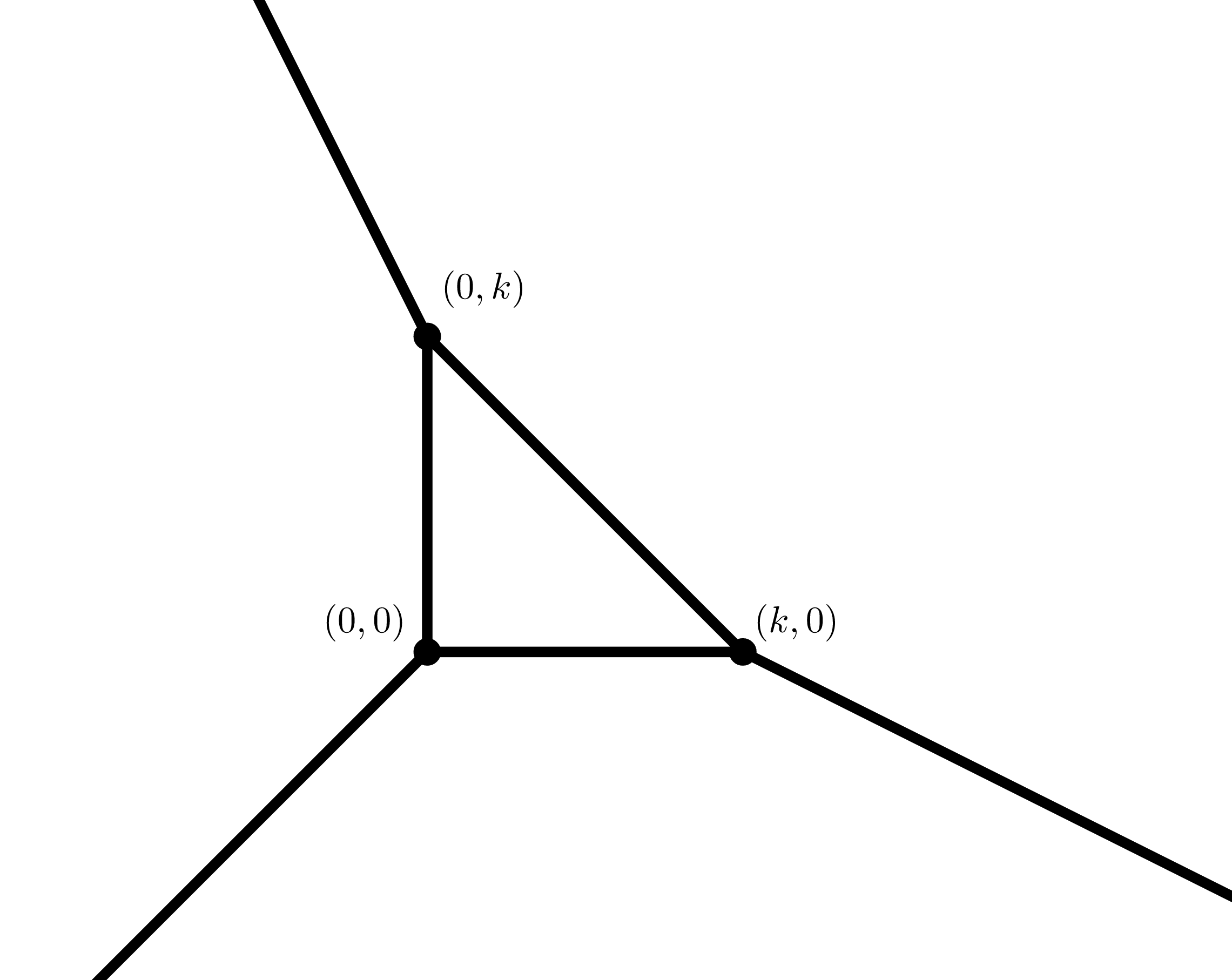}}
\caption{\label{Plaatje1} The tropicalizations obtained in Theorem \ref{MainTheorem} from the elliptic curves in Equation \ref{MainEquation}.  }%
\end{figure} 

The field $K$ in the theorem can be more general than the field of Puiseux series, but the reader can assume throughout this paper that it is the field of Puiseux series as above. To be precise: we assume that our field $K$ is an algebraically closed non-archimedean field with valuation ring $R$, maximal ideal $\mathfrak{m}$, residue field $k$ and valuation $v:K^{*}\rightarrow{\mathbb{R}}$. 
We assume furthermore throughout this paper that $\mathrm{char}(k)\neq{2,3}$.  

Several proofs of Theorem \ref{MainTheorem} have already been given, for instance \cite[Proposition 2.1]{CSHoneycomb} and \cite[Theorem 6.2]{BPR11}. Our proof will follow the strategy of \cite[Theorem 6.2]{BPR11}, but we will not use the analytic slope formula, nor any of the results on faithful tropicalizations. The idea is to use reduction theory for elliptic curves in Weierstrass form $y^2=x^3+Ax+B$ to explicitly give a pair $(f,g)\in{(K(E))^{2}}$ such that the corresponding tropicalization has the desired cycle. To find these elements, we will use inflection points on families of elliptic curves, see Figure \ref{PlaatjeDegTrop3} for an explicit example over the real numbers. Under an appropriate choice of $f$ and $g$, the affine equation that cuts out the image of $E$ in $\mathbb{P}^{2}$ is then given by 
\begin{equation}\label{MainEquation}
f^2g+2a'fg-fg^2-2a'b=0
\end{equation}
for $a'$ and $b$ satisfying $v(a')=0$ and $v(b)>0$. The tropicalization of this curve then contains a triangle with sides of length $v(b)$ as in Figure \ref{Plaatje1} and we show that the valuation of the $j$-invariant of $E$ is $-3v(b)$.

Since reduction theory is usually only given in the discrete case (see \cite[Chapter VII]{Silv1}), we give a more or less full treatment for the non-discrete case. We define minimal models, reduction types and we show that any elliptic curve has multiplicative reduction if and only if $v(j(E))<0$. These tools don't seem to be as well-known  
 in tropical geometry as in arithmetic algebraic geometry due to the advent of Tate uniformizations, Berkovich spaces and (formal) semistable models. We chose to include most of the classical results on reduction theory (albeit in an altered form) in this text. We also included an introduction to tropical geometry in Section \ref{Tropicalizations}, giving most of the definitions, the fundamental theorem of tropical geometry and the structure theorem. 

We now give a quick review of the proofs of Theorem \ref{MainTheorem} given in \cite{CSHoneycomb} and \cite{BPR11}. In \cite{CSHoneycomb}, one starts with a two parameter family of elliptic curves whose Newton complexes are unimodular triangulations, the so-called elliptic curves in symmetric honeycomb form. More explicitly, these are given by 
\begin{equation}\label{CurveSturm}
a\cdot{(x^3+y^3+z^3)}+b\cdot{}(x^2y+x^2z+y^2x+y^2z+z^2y+z^2x)+xyz,
\end{equation}
where $a,b\in{K}$ satisfy 
\begin{equation}
{v}(a)>2\cdot{{v}(b)}>0. 
\end{equation}

For any $a,j\in{K}$ with ${v}(j)<0$ and $v(a)+v(j)>0$, they then show that there are exactly $6$ values of $b$ such that the $j$-invariant of the genus $1$ curve $E_{a,b}$ in Equation \ref{CurveSturm} is equal to $j$. Since unimodular triangulations automatically induce faithful tropicalizations by \cite[Corollary 5.28]{BPR11}, the theorem is then proved. One can also skip this theorem on faithful tropicalizations using the results in \cite{katz_markwig_markwig_2009}. There it is shown computationally that any curve in symmetric honeycomb form automatically has a cycle of length $-v(j(E_{a,b}))$. This then also directly gives the theorem. 

In the proof of \cite[Theorem 6.2]{BPR11}, they start with the {\it{Tate uniformization theorem}}, which can be stated as follows. 
 Let $E$ be an elliptic curve over a 
nonarchimedean, complete, algebraically closed field $K$ with $v(j(E))<0$. Then there exists an isomorphism
\begin{equation}
E(K)\simeq{K^{*}/\langle{q}\rangle}
\end{equation}
for some $q\in{K}$ with $v(q)=-v(j(E))$. They then set out to find functions $f$ and $g$ whose associated piecewise linear functions $-\mathrm{log}|f|$ and $-\mathrm{log}|g|$ induce an isometric embedding $\Sigma\rightarrow{\mathbb{R}^{2}}$ of the minimal skeleton $\Sigma$ (in the associated Berkovich space of $E$) into $\mathbb{R}^{2}$. To ensure that this is an isometric embedding, they need that at least one of the piecewise linear functions has slope equal to $1$ on every edge. They then create these functions $f$ and $g$ using the image of the three-torsion point $P=q^{1/3}$ in $E(K)$.

Our approach mostly follows the strategy outlined in the previous paragraph. There are three differences however. First of all, we do not use the Tate uniformization theorem to obtain the desired three-torsion point. Instead, we will use the geometric interpretation of three-torsion points being {\it{inflection points}} (see Lemma \ref{InflectionPoint}). To find the correct analogue of $q^{1/3}$, 
we work out a reduction theory for elliptic curves over $K$ and we use some classical results from arithmetic geometry to find the desired inflection point $P$. In doing this, we obtain an {\it{explicit family}} of elliptic curves $E$ with $v(j(E))<0$ and a marked inflection point $P$ reducing to the singular point, see Lemmas \ref{ExplicitFamily1} and \ref{LemmaValuation}. Moreover, other ingredients in the proof of \cite[Theorem 6.2]{BPR11} are also made explicit, as we give a pair of functions $(f,g)\in{(K(E))}^{2}$ which induce a closed embedding, and we explicitly calculate its image. 
The corresponding tropicalization then contains a triangle and we show that its length is equal to $-v(j(E))$. This then also highlights the second key difference: we do not use any analytic material such as the {\it{Poincar\'{e}-Lelong formula}} (See \cite[Theorem 5.15]{BPRa1}) or the criterion for faithful tropicalizations (see \cite[Corollary 5.28]{BPR11}) to abstractly conclude that our tropicalizations contain a cycle of the right length. We also note that we don't assume that our nonarchimedean field is complete. This allows us to directly use our results on the field of Puiseux series $\mathcal{P}$ for instance. 

In our Main Theorem, we show that for any elliptic curve $E/K$ with $v(j(E))<0$, we can find a tropicalization such that it contains a cycle of length $-v(j(E))$. We call this a numerically faithful tropicalization, see Section \ref{NumericallyFaithful}. This is not the same as a faithful tropicalization however. The difference is subtle and in Section \ref{FaithfulTropSection} we explore this difference. Simply put, by expanding some of the edges and contracting others, we can obtain a cycle that has the right length, but which does not faithfully represent the minimal skeleton of the associated Berkovich space since it is not injective on the contracted parts. In Section \ref{FaithfulTropSection}, we abstractly prove the existence of a numerically faithful tropicalization that is not faithful, see Example \ref{NonFaithfulExample}.

Over the last couple of years, various new results on faithful tropicalizations have been published. For instance, in \cite{SkeletonJacobian} they proved  using tropical Jacobians that for any curve $C$, there exists a rational map $C\rightarrow{\mathbb{P}^{3}}$ such that its tropicalization is an isometry onto its image. For genus two and Mumford curves, there are other more specific results in this direction, see \cite{Wagner2015} or \cite{JellMumfordFaithful}. On the other hand, one can also consider faithful tropicalizations of more general algebraic varieties. For results in these directions, we would like to direct the reader to \cite{Cueto2014} for faithful tropicalizations of the Grassmannian of planes and \cite{HypertoricFaithful} for faithful tropicalizations of hypertoric varieties. 



The paper is structured as follows. We start by summarizing some well-known results in tropical geometry in Section \ref{Tropicalizations}. We cover the tropical semiring, tropical varieties, the fundamental theorem (Theorem \ref{FundTrop}), the structure theorem (Theorem \ref{TropComplex}) and we give the definition of a {\it{numerically faithful tropicalization}}. We then compare this definition to that of a faithful tropicalization in Section 
\ref{FaithfulTropSection}. Here we give two examples that are numerically faithful but not faithful.  In Section \ref{ModelsSection}, we introduce a reduction theory for elliptic curves over $K$ as in \cite[Chapter VII]{Silv1} and in Section \ref{FinalSection} we prove Theorem \ref{MainTheorem}. 

We tried to keep the text as elementary as possible, giving examples of the notions introduced wherever possible. As such, we believe that this paper serves as a didactic tool in understanding some of the more  abstract material in \cite{BPR11} and \cite{BPRa1} in concrete terms. To further aid the reader in this, we will 
point out any similarities and differences between our approach and the one in \cite{BPR11} as we come across them.

We will be using most of the conventions regarding algebraic geometry as introduced in \cite[Chapters II and III]{Silv1}. 
For tropical geometry, we will mostly be using \cite[Chapter 3]{tropicalbook}. We also refer the reader to those two books for more background information regarding these topics.

\section*{Acknowledgements}

The author would like to thank Pedro \'{A}ngel Castillejo Blasco for his comments and suggestions on an early version of this paper. The author would also like to thank the reviewers for their comments and suggestions, especially for suggesting the notion of a {\it{numerically faithful tropicalization}}, see Sections \ref{NumericallyFaithful} and \ref{FaithfulTropSection}.

\section{Tropicalizations}\label{Tropicalizations}

In this section we discuss the notion of a tropicalization of a closed variety $X/K$ inside $(K^{*})^{n}$, where $K$ is a nonarchimedean field as in Section \ref{Introduction}. 
We will be mostly interested in the case where $X$ is induced by an algebraic curve such as an elliptic curve over $K$. We introduce the tropical semiring and we recall the fundamental theorem of tropical geometry, which we will use in our main theorem as an easy tool to calculate tropicalizations. We refer the reader to \cite{tropicalbook} for more background information regarding this topic. In Section \ref{NumericallyFaithful}, we study the underlying combinatorial structure of the tropicalization of a variety and we discuss the tropical structure theorem: Theorem \ref{TropComplex}. This in turn allows us to define the lattice length on the tropicalization of a curve, see Definition \ref{LatticeLength}. After this we define a numerically faithful tropicalization for the special case of elliptic curves in Definition \ref{NumericallyFaithfulDefinition}. In Section \ref{FaithfulTropSection} we then compare this definition to the definition of a faithful tropicalization given in \cite[Section 5.15]{BPR11} and we give two examples of a numerically faithful tropicalization that is not faithful. 

Consider the extended real line $\mathcal{R}:=\mathbb{R}\cup\{\infty\}$ with its natural total order. We turn this into a semiring by defining the following two operations on $\mathcal{R}$:
\begin{align*}
a\oplus{b}&=\mathrm{min}\{a,b\},\\
a\odot{b}&=a+b.
\end{align*}
Here we set $\infty\,{\odot{\,b}}=b\odot{\infty}=\infty$ for any $b\in\mathcal{R}$.  
 We note that these operations mimic the following two identities of the valuation function $v:K\rightarrow{\mathcal{R}}$:
\begin{align*}
v(a+b)&\geq{\mathrm{min}\{v(a),v(b)\}},\\
v(a\cdot{b})&=v(a)+v(b).
\end{align*}
A multivariate tropical polynomial in $n$ variables ${\bf{x}}=(x_{1},x_{2},...,x_{n})$, written as $f({\bf{x}})=\bigoplus_{i\in{I}}^{}a_{i}\odot{{\bf{x}}^{i}}$ for $a_{i}\in{\mathbb{R}}$, is then the function $\mathcal{R}^{n}\rightarrow{\mathcal{R}}$ given by
\begin{equation}\label{TropicalPolynomial}
{\mathbf{x}}\mapsto{\bigoplus_{i\in{I}}^{}a_{i}\odot{{\bf{x}}^{i}}}.
\end{equation}
Here $I$ is a finite subset of $\mathbb{N}^{n}$, similar to the case of multivariate polynomials $K[x_{1},x_{2},...,x_{n}]$. Any monomial of the form $a_{i}\odot{{\bf{x}}^{i}}$ is referred to as a term of $f$. 
\begin{exa}\label{ExampleTropicalPolynomial}
Consider the tropical polynomial $f=(1\odot{}x\odot{y})\oplus{}(2\odot{2x})={\mathrm{min}\{1+x+y,2+2x\}}$.   This defines the following piecewise linear function on $\mathbb{R}^{2}$:
\begin{equation}
f(x,y)=\left\{\begin{array}{rl}
2+2x  & \text{ for } y\leq{}1+x,\\
1+x+y &  \text{ for } y>1+x.
\end{array}\right.
\end{equation}
\end{exa} 

Using a slight alteration of the definition of a tropical polynomial, we obtain the notion of a tropical Laurent polynomial. The definition imitates that of the ring $K[x_{1},x_{1}^{-1},...,x_{n},x_{n}^{-1}]$. A tropical Laurent polynomial is by definition a piecewise linear function as in Equation \ref{TropicalPolynomial}, where we now allow the index set $I$ to be a subset of $\mathbb{Z}^{n}$. For instance, for tropical Laurent polynomials in one variable $x$, we have that $x^{-i}$ (written tropically) is equal to the function $-i\cdot{x}$. The multivariate case is similar. 

We now define the tropicalization of an algebraic set $V(I)\subset{(K^{*})^{n}}$ corresponding to an ideal $I\subset{K[x_{1},x_{1}^{-1},...,x_{n},x_{n}^{-1}]=:K[{\bf{x}},{\bf{x}}^{-1}]}$. To that end, we first define the tropicalization of a multivariate polynomial $f\in{K}[{\bf{x}},{\bf{x}}^{-1}]$. Let $f=\sum_{i\in{I}}a_{i}{\bf{x}}^{i}$ be a multivariate Laurent polynomial. We define the tropicalization of $f$ to be the tropical Laurent polynomial given by
\begin{equation}
\mathrm{trop}(f)=\bigoplus_{i\in{I}}v(a_{i})\odot{{\bf{x}}^{i}},
\end{equation}
where the product ${\bf{x}}^{i}$ is now a tropical product.
\begin{exa}
Let $f=\varpi^2+\varpi{x^2}+y^3+x^{-1}$, where $\varpi\in{K}$ satisfies $v(\varpi)=1$. Then $\mathrm{trop}(f)=\mathrm{min}\{2,1+2x,3y,-x\}$. 
\end{exa}

For any multivariate Laurent polynomial $f$, we now introduce the notions of a tropical hypersurface and a tropical variety.
\begin{mydef}\label{DefinitionTropicalHypersurface}
Let $f$ be a multivariate Laurent polynomial with monomial terms $h_{i}$. We define the {\bf{tropical hypersurface}} corresponding to a $f$ to be the set of points ${\bf{x}}\in\mathbb{R}^{n}$ such that $\mathrm{trop}(f)({\bf{x}})=\mathrm{trop}(h_{i})({\bf{x}})=\mathrm{trop}(h_{j})({\bf{x}})$ for at least two different terms of $f$. It is denoted by $\mathrm{trop}(V(f))$. A {\bf{tropical pre-variety}} is then an intersection of these tropical hypersurfaces and a {\bf{tropical variety}} is a subset of $\mathbb{R}^{n}$ of the form
\begin{equation}
\mathrm{trop}(V(I))=\bigcap_{f\in{I}}\mathrm{trop}(V(f)),
\end{equation} 
where $I$ is an ideal in the Laurent polynomial ring $K[x_{1},x_{1}^{-1},...,x_{n},x_{n}^{-1}]$. We refer to this subset $\mathrm{trop}(V(I))$ as the {\it{tropicalization}} of $I$. 
\end{mydef}

\begin{exa}
Let $f$ be the tropical polynomial from Example \ref{ExampleTropicalPolynomial}. Then the tropical hypersurface corresponding to $f$ is given by $\mathrm{trop}(V(f))=\{(x,y)\in\mathbb{R}^{2}:y=1+x\}$. 
\end{exa}

\begin{exa}\label{Prevarieties1}
Not every tropical pre-variety is a tropical variety, as we will see in Example \ref{Prevarieties2}. The idea is to take a maximal ideal $I$ inside $K[x,x^{-1},y,y^{-1}]$ with two generators $f_{1}$ and $f_{2}$ such that the intersection of the two corresponding tropical hypersurfaces is a half-ray. This intersection is one-dimensional, whereas the zero set of the ideal $I$ is zero-dimensional. We will see that this causes any ideal $J$ with $\mathrm{trop}(V(J))=\mathrm{trop}(V(f_{1}))\cap{\mathrm{trop}(V(f_{2}))}$ to be zero-dimensional, which contradicts the fundamental theorem of tropical geometry: Theorem \ref{FundTrop}.    
The details are given in Example \ref{Prevarieties2}.  
\end{exa}



We now relate the construction of tropical varieties to another, perhaps more natural, construction. Consider the "naive" tropicalization map
\begin{align}\label{NaiveMap}
\mathrm{val}:(K^{*})^{n}&\rightarrow{\mathbb{R}^{n}}\\
(x_{1},x_{2},...,x_{n})&\mapsto{(v(x_{1}),v(x_{2}),...,v(x_{n}))}.
\end{align}
\begin{mydef}\label{NaiveTropicalization}
Let $I$ be an ideal of the Laurent polynomial ring $K[\mathbf{x},\mathbf{x}^{-1}]$. The {\it{naive tropicalization}} of an algebraic set $V(I)$ is defined to be the closure of the image of $V(I)$ under $\mathrm{val}(\cdot{})$. We denote it by $\mathrm{val}(V(I))$. It is also referred to as the {\bf{non-archimedean amoeba}} associated to $I$, see \cite[Definition 1.1.1]{Einsiedler2006}.
 \end{mydef}
\begin{exa}\label{PlaneCurve}
Let $C$ be the plane curve defined by $f:=\varpi^{-3}x_{1}+\varpi^{-2}x_{2}-1=0$. We consider three cases.
\begin{itemize}
\item Suppose that $v(x_{1})>3$. Then we must have $v(x_{2})=2$ by considering the valuations of both sides of the equation $\varpi^{-3}x_{1}=1-\varpi^{-2}x_{2}$. 
\item Suppose that $v(x_{2})>2$. Then similarly $v(x_{1})=3$.
\item Suppose now that $v(x_{1})\leq{3}$ and $v(x_{2})\leq{2}$. Then the valuations of $\varpi^{-3}x_{1}$ and $\varpi^{-2}x_{2}$ have to be equal in order to obtain $v(1)=0$. In other words, $-3+v(x_{1})=-2+v(x_{2})$, which gives $v(x_{1})=1+v(x_{2})$. 
\end{itemize}
\begin{figure}[h!]
\centering
\includegraphics[width=5cm]{{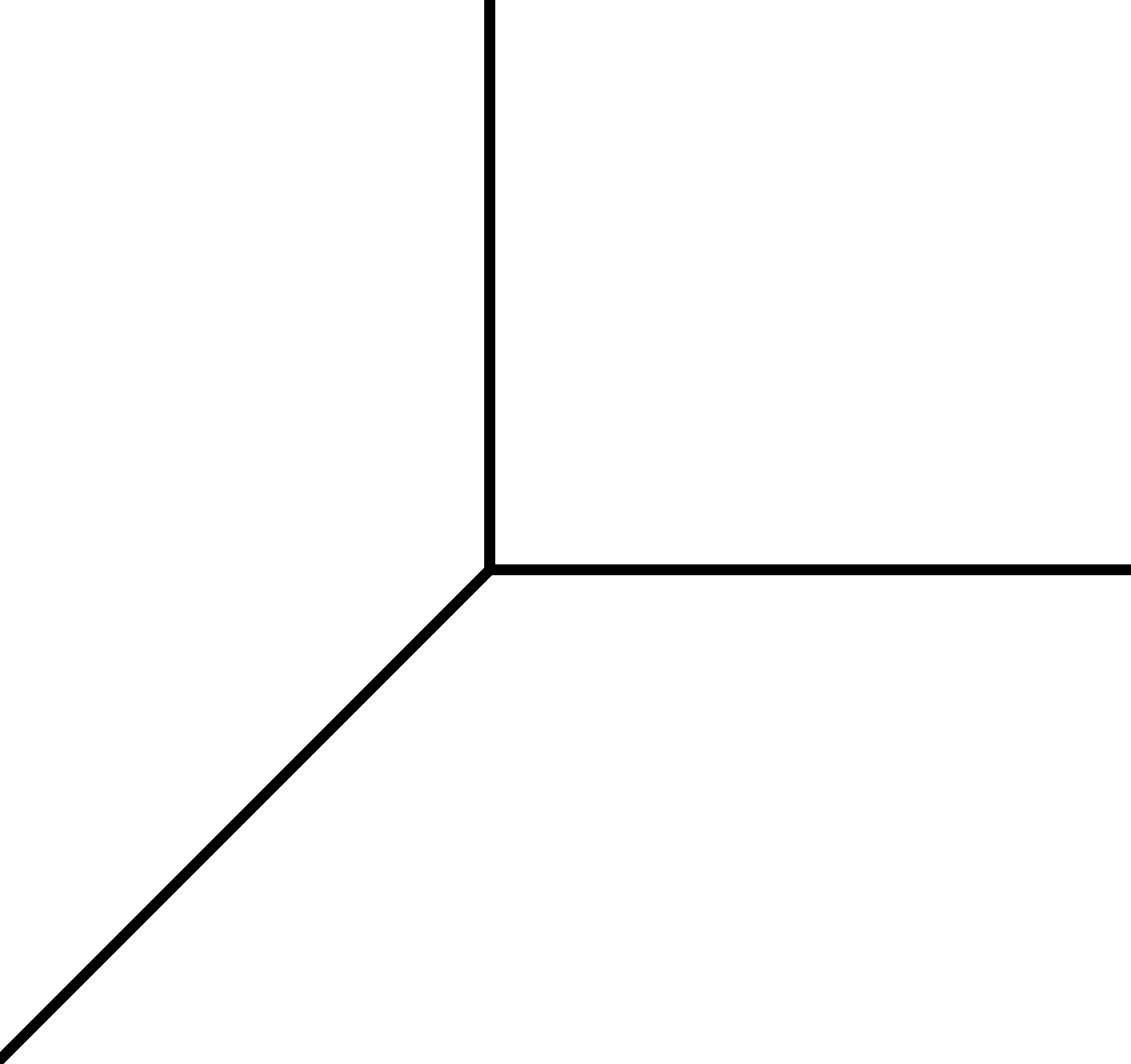}}
\caption{\label{Plaatje2} The tropicalization obtained in Example \ref{PlaneCurve}. }\end{figure}
We thus see that the nonarchimedean amoeba consists of three linear pieces, as depicted in Figure \ref{Plaatje2}.
\end{exa} 

The good news now is that the naive tropicalization of $V(I)$ coincides with the tropicalization defined in Definition \ref{DefinitionTropicalHypersurface}. This is also known as the fundamental theorem of tropical geometry.
\begin{theorem}\label{FundTrop}{\bf{(Fundamental theorem of tropical geometry)}}

Let $\mathrm{trop}(V(I))$ be the tropical variety defined in Definition \ref{DefinitionTropicalHypersurface} and let $\mathrm{val}(V(I))$ be the naive tropicalization defined in Definition \ref{NaiveTropicalization}. Then 
\begin{equation}
\mathrm{trop}(V(I))=\mathrm{val}(V(I)).
\end{equation}
\end{theorem}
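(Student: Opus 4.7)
The plan is to prove the two inclusions separately. For the easy direction $\mathrm{val}(V(I)) \subseteq \mathrm{trop}(V(I))$, I would invoke the non-archimedean ultrametric property. Take $\mathbf{x} \in V(I) \subset (K^*)^n$ and any $f = \sum_{i \in S} a_i \mathbf{x}^i \in I$. Since $f(\mathbf{x}) = 0$, the strong triangle inequality forces the minimum of the values $v(a_i) + \langle i, v(\mathbf{x})\rangle$ to be attained at least twice; otherwise the sum would have a unique minimum-valuation term and could not vanish. Hence $v(\mathbf{x}) \in \mathrm{trop}(V(f))$ for every $f \in I$, so $v(\mathbf{x}) \in \mathrm{trop}(V(I))$. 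Since $\mathrm{trop}(V(I))$ is closed as an intersection of closed sets, taking the closure of the image yields this inclusion.

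For the reverse inclusion $\mathrm{trop}(V(I)) \subseteq \mathrm{val}(V(I))$, my strategy proceeds in three steps. First, since $v(K^*)$ is dense in $\mathbb{R}$ and $\mathrm{val}(V(I))$ is closed by definition, it suffices to show that every $\mathbf{w} \in \mathrm{trop}(V(I))$ whose coordinates lie in the value group belongs to $\mathrm{val}(V(I))$. Second, fixing such a $\mathbf{w}$ and choosing $t_i \in K^*$ with $v(t_i) = w_i$, the multiplicative change of coordinates $x_i \mapsto t_i x_i$ sends $I$ to a new ideal $I'$ and reduces the problem to showing: if $\mathbf{0} \in \mathrm{trop}(V(I'))$, then there exists a point $\mathbf{y} \in V(I') \cap (R^*)^n$. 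Third, the condition $\mathbf{0} \in \mathrm{trop}(V(I'))$ translates, after scaling each polynomial so that its coefficients lie in $R$ with at least one coefficient in $R^*$, into the statement that no such rescaled $f \in I'$ has a reduction $\overline{f} \in k[\mathbf{x}, \mathbf{x}^{-1}]$ that is a monomial.

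The main obstacle is then the lifting step in the third stage. Letting $\mathrm{in}_{\mathbf{0}}(I') \subset k[\mathbf{x}, \mathbf{x}^{-1}]$ denote the ideal generated by these reductions, a Gr\"{o}bner-theoretic argument is needed to promote the previous no-monomial condition for individual polynomials into the statement that $\mathrm{in}_{\mathbf{0}}(I')$ itself contains no monomial. Given that, one uses that $k$ is algebraically closed together with Hilbert's Nullstellensatz to produce a point $\overline{\mathbf{y}} \in V(\mathrm{in}_{\mathbf{0}}(I')) \cap (k^*)^n$. Finally, one lifts $\overline{\mathbf{y}}$ to $\mathbf{y} \in V(I') \cap (R^*)^n$ via a flatness argument over $R$ applied to a suitable universal Gr\"{o}bner basis, or equivalently via a Hensel-type iteration. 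I expect this lifting step, together with the verification that the initial ideal is monomial-free, to be the technical heart of the argument, and for the purposes of the present paper I would simply cite the result from a standard reference such as \cite[Theorem 3.2.3]{tropicalbook} rather than reprove it in full.
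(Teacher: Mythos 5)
The paper's proof of this theorem consists solely of a citation to \cite[Theorem 3.2.3]{tropicalbook}, and your proposal ultimately defers to the same reference; the sketch you give along the way (the easy inclusion via the ultrametric inequality, reduction to $\Gamma$-rational points and then to $\mathbf{w}=\mathbf{0}$ by a monomial change of coordinates, and the Gr\"{o}bner/initial-ideal lifting argument using that $k$ is algebraically closed) is precisely the structure of the proof in that reference, so your approach agrees with the paper's.
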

\begin{proof}
See \cite[Theorem 3.2.3]{tropicalbook}.
\end{proof}
\begin{rem}\label{RadicalTropicalization}
Since the algebraic variety $V(I)$ only depends on the radical $\sqrt{I}$ of $I$ in the sense that $V(I)=V(\sqrt{I})$, we conclude by Theorem \ref{FundTrop} that $\mathrm{trop}(V(I))=\mathrm{trop}(V(\sqrt{I}))$. 
\end{rem}

\begin{exa}\label{TropicalizationExample1}
Let $f=\varpi^{-3}x_{1}+\varpi^{-2}x_{2}-1$ as in Example \ref{PlaneCurve}. Its tropical polynomial is then given by
\begin{equation}
\mathrm{trop}(f)=\mathrm{min}\{x_{1}-3,x_{2}-2,0\}.
\end{equation}
We then easily see that the points $(x_{1},x_{2})$ where the minimum is attained at least twice is exactly equal to the non-archimedean amoeba calculated in Example \ref{PlaneCurve}. 
\end{exa}

\begin{exa}\label{Prevarieties2}
We now continue the train of thought in Example \ref{Prevarieties1} and give an example of a tropical pre-variety $S$ that is not a tropical variety. Our proof that $S$ is not a tropical variety will use Theorem \ref{FundTrop} together with some commutative algebra.  
\begin{figure}[h!]
\centering
\includegraphics[width=6cm]{{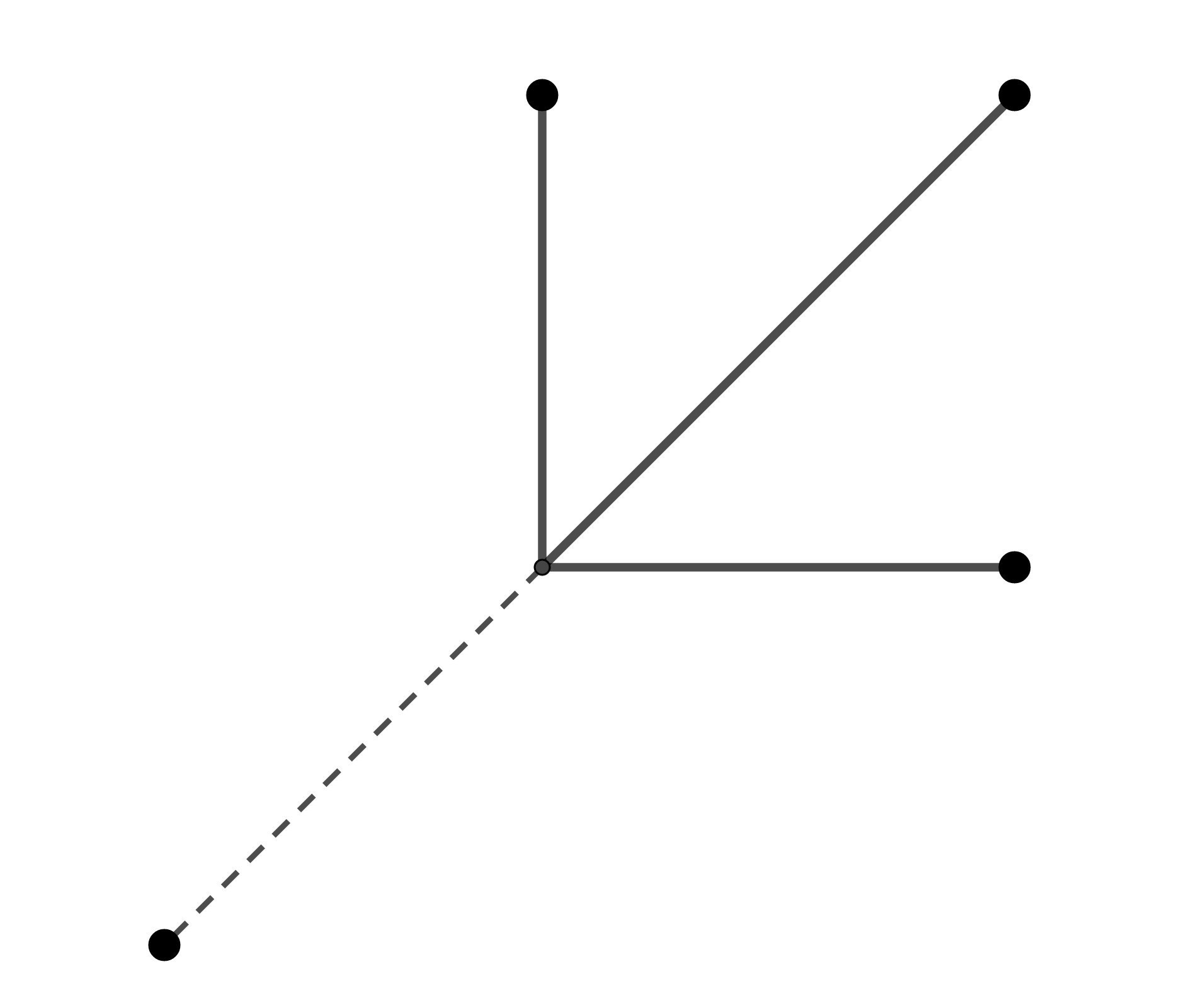}}
\caption{\label{Plaatje691} The dotted line gives a tropical pre-variety which is not a tropical variety, see 
Example \ref{Prevarieties2} for the proof.} 
\end{figure}

Consider the ideal $I=(1+x+y,x+2y)$ in $K[x,x^{-1},y,y^{-1}]$. 
One then easily checks that $V(I)=\{(-1,2)\}$, so Theorem \ref{FundTrop} tells us that $\mathrm{trop}(V(I))$ consists of a single point. We now consider the tropicalizations of the hypersurfaces defined by $f_{1}=1+x+y$ and $f_{2}=x+2y$. The tropicalization of $V(1+x+y)$ is then as in Examples \ref{PlaneCurve} and \ref{TropicalizationExample1} and the tropicalization of $V(x+2y)$ is a single line segment. They intersect in the half-ray $S=\{(w_{0},w_{1}):w_{0}=w_{1}<0\}\subset{\mathbb{R}^{2}}$, see Figure \ref{Plaatje691}. We claim that this half-ray is not of the form $\mathrm{trop}(V(J))$ for any ideal $J$ in $K[x,x^{-1},y,y^{-1}]$. 

Suppose for a contradiction that we can write $S=\mathrm{trop}(V(J))$ for some ideal $J$. By Remark \ref{RadicalTropicalization}, we can and do assume that $J$ is radical. By \cite[Corollary 2.12]{Eisenbud1995} we find that $J$ is the intersection of all prime ideals that contain $J$ and thus it is the intersection of the finitely many minimal prime ideals that contain it (see \cite[Chapter 3]{Eisenbud1995} for the notion of a minimal prime ideal). 
We will show that these prime ideals all have height $2$, which shows that they are maximal since the dimension of $K[x,x^{-1},y,y^{-1}]$ is two. 

Note that none of these prime ideals can have height zero, since $J\neq{0}$. 
 Suppose on the other hand that the height of such a $\mathfrak{p}_{i}$ is one. Then $\mathfrak{p}_{i}$ is principal, since $K[x,x^{-1},y,y^{-1}]$ is a unique factorization domain (being a nonzero localization of a unique factorization domain). We can thus write $\mathfrak{p}_{i}=(f_{i})$ for an irreducible element $f_{i}\in{K[x,x^{-1},y,y^{-1}]}$. Since $\mathfrak{p}_{i}=(f_{i})\supset{J}$, we obtain $V(f_{i})\subset{V(J)}$ and thus $\mathrm{trop}(V(f_{i}))\subset{\mathrm{trop}(V(J))}=S$. But this can't happen: $\mathrm{trop}(V(f_{i}))$ will contain points with positive valuation. Indeed, for almost all $x_{0}$ (i.e. a Zariski dense set in $K$\footnote{To be explicit, we can write $f=\sum{c_{i}(x)y^{i}}$ where the $c_{i}(x)$ are elements of $K[x,x^{-1}]$ and consider a nonzero coefficient $c_{i}(x)$ for $i\neq{0}$. The set $U=K\backslash{}V(c_{i}(x))$ inside $K$ with its Zariski topology then has the required properties.}), we have that $f_{i}(x_{0},y)$ is a nonconstant polynomial. Since $K$ is algebraically closed, there exists a $y_{0}$ such that $f_{i}(x_{0},y_{0})=0$ and for any such $y_{0}$,  
we then have that $P=(x_{0},y_{0})\in{V(f_{i})}\subset{V(J)}$. If we now chose $x_{0}$ such that $v(x_{0})>0$, this will contradict $\mathrm{trop}(V(f_{i}))\subset{\mathrm{trop}(V(J))}$. Indeed, Theorem \ref{FundTrop} gives us $\mathrm{val}(P)\in\mathrm{val}(V(f_{i}))=\mathrm{trop}(V(f_{i}))\subset{S}$, which is the desired contradiction. We thus conclude that the minimal primes of $J$ are maximal ideals. That is, $J$ defines a zero-dimensional variety. But then $S=\mathrm{trop}(V(J))=\mathrm{val}(V(J))$ must also consist of finitely many points, another contradiction. We conclude that $S$ is not a tropical variety.  

\end{exa}

We now investigate some of the combinatorics underlying a tropical variety. To that end, we first recall some polyhedral geometry. A good reference for the material here is \cite{Ziegler1995}. We will mostly follow 
\cite[Section 2.3]{tropicalbook} for the tropical side of things. 

A {\it{polyhedron}} $P$ is a subset of $\mathbb{R}^{n}$ that can be written as
\begin{equation}\label{Polyhedron}
P=\{\mathbf{x}\in\mathbb{R}^{n}:A\mathbf{x}\leq{\mathbf{b}}\},
\end{equation}
where $A$ is a $d\times{n}$-matrix and $\mathbf{b}\in\mathbb{R}^{d}$. A {\it{face}} of a polyhedron $P$ is then a subset of $P$ that can be written as
\begin{equation}
\mathrm{face}_{\mathbf{w}}(P)=\{\mathbf{x}\in{P}:\mathbf{w}(\mathbf{x})\leq\mathbf{w}(\mathbf{y}) \text{ for all }\mathbf{y}\in{P}\},
\end{equation}
where $\mathbf{w}$ is some element of the dual space $(\mathbb{R}^{n})^{\vee}:=\mathrm{Hom}(\mathbb{R}^{n},\mathbb{R})$. A {\it{facet}} of a polyhedron is a face that is not contained in a larger proper face. 
A {\it{polyhedral complex}} is a finite collection $\Sigma$ of polyhedra in $\mathbb{R}^{n}$ such that the following two hold:
\begin{enumerate}
\item If $P\in{\Sigma}$, then any face of $P$ is also in $\Sigma$.
\item If $P$ and $Q$ are in $\Sigma$, then $P\cap{Q}$ is either empty or a face of both $P$ and $Q$. 
\end{enumerate}
The elements of a polyhedral complex are referred to as {\it{cells}} and cells of a polyhedral complex that are not faces of any larger cell are the {\it{facets}} of the complex. 
The underlying points of a polyhedral complex $\Sigma$, or the {\it{support}} of $\Sigma$, is defined by
\begin{equation}
\mathrm{supp}(\Sigma)=|\Sigma|:=\{\mathbf{x}\in\mathbb{R}^{n}:\mathbf{x}\in{P} \text{ for some }P\in\Sigma\}. 
\end{equation}
For a polyhedron $P$ inside a polyhedral complex $\Sigma$, the {\it{affine span}} of $P$ is the smallest affine linear space that contains it. The {\it{dimension}} of the polyhedron $P$ is then the dimension of this affine linear space. We say that a polyhedral complex $\Sigma$ is pure of dimension $n$ if every facet of $\Sigma$ has dimension $n$. 

We now consider $\Gamma$-rational polyhedra and polyhedral complexes, where $\Gamma$ is a subgroup of the additive group of $\mathbb{R}$. For us, this $\Gamma$ will always be the value group of the discretely valued field $K$. A {\it{$\Gamma$-rational polyhedron}} is a polyhedron that is defined by a matrix $A$ with integer entries and a vector $\mathbf{b}\in{(\Gamma)^{d}}$ as in Equation \ref{Polyhedron}. Similarly, a $\Gamma$-rational polyhedral complex is a polyhedral complex consisting of $\Gamma$-rational polyhedra and its support is just its support as a polyhedral complex.  

\begin{theorem}\label{TropComplex}
{\bf{[Structure theorem for tropical varieties]}}
Let $I$ be an ideal of $K[\mathbf{x},\mathbf{x}^{-1}]$ defining the variety $X:=V(I)$. Let $\mathrm{trop}(X)$ be its tropicalization, as defined in Definition \ref{DefinitionTropicalHypersurface}. Then $\mathrm{trop}(X)$ is the support of a $\Gamma$-rational polyhedral complex $\Sigma$. If $X$ is an irreducible variety of dimension $n$, then the $\Gamma$-rational polyhedral complex $\Sigma$ obtained above is pure of dimension $n$. 

\end{theorem}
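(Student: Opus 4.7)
The plan is to build the polyhedral complex first for a single tropical hypersurface, then for a general ideal via a tropical basis, and to handle the purity part separately with Gröbner‑theoretic methods.

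For a single Laurent polynomial $f=\sum_{i\in J}a_{i}\mathbf{x}^{i}$, the tropicalization $\mathrm{trop}(f)(\mathbf{w})=\min_{i\in J}\{v(a_{i})+\langle i,\mathbf{w}\rangle\}$ is the minimum of finitely many affine functions whose linear parts are integer vectors and whose constant terms lie in $\Gamma$. For each pair $i\neq j$ in $J$, the set where both the $i$-th and $j$-th terms realise the minimum is the $\Gamma$-rational polyhedron
\begin{equation*}
C_{ij}=\{\mathbf{w}\in\mathbb{R}^{n}:v(a_{i})+\langle i,\mathbf{w}\rangle=v(a_{j})+\langle j,\mathbf{w}\rangle\leq v(a_{k})+\langle k,\mathbf{w}\rangle\text{ for all }k\in J\}.
\end{equation*}
The $C_{ij}$, together with all their common faces, form a $\Gamma$-rational polyhedral complex $\Sigma_{f}$ whose support equals $\mathrm{trop}(V(f))$. (Dually, $\Sigma_{f}$ is the codimension‑$\geq 1$ skeleton of the regular subdivision of the Newton polytope of $f$ induced by the weights $v(a_{i})$.)

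For a general ideal $I$, I would replace the possibly infinite intersection in Definition~\ref{DefinitionTropicalHypersurface} by a finite one via a \emph{tropical basis}: a finite subset $\{f_{1},\ldots,f_{r}\}\subset I$ with $\mathrm{trop}(V(I))=\bigcap_{j=1}^{r}\mathrm{trop}(V(f_{j}))$. Existence follows from the finiteness of the Gröbner fan of $I$ together with the Fundamental Theorem~\ref{FundTrop}: on every cell of an initial candidate pre‑variety where the inclusion $\mathrm{trop}(V(I))\subset\bigcap_{j}\mathrm{trop}(V(f_{j}))$ is strict, one appends an element of $I$ whose tropical hypersurface removes the superfluous part, and the Gröbner fan bounds the process. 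A common refinement of the $\Sigma_{f_{j}}$ is then a $\Gamma$-rational polyhedral complex $\Sigma$ with $\mathrm{supp}(\Sigma)=\mathrm{trop}(V(I))$, proving the first claim.

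For purity when $X$ is irreducible of dimension $n$, the plan is to use the initial‑ideal description: a weight $\mathbf{w}$ lies in $\mathrm{trop}(X)$ if and only if $\mathrm{in}_{\mathbf{w}}(I)$ is a proper ideal containing no monomial. After refining $\Sigma$ so that $\mathrm{in}_{\mathbf{w}}(I)$ depends only on the cell whose relative interior contains $\mathbf{w}$, the decisive input is that $\dim K[\mathbf{x},\mathbf{x}^{-1}]/\mathrm{in}_{\mathbf{w}}(I)=\dim K[\mathbf{x},\mathbf{x}^{-1}]/I=n$, via a flat degeneration. This bounds every cell's dimension by $n$; and irreducibility together with a Noether‑normalisation/connectedness argument then shows that every $\mathbf{w}\in\mathrm{trop}(X)$ lies in the closure of an $n$-dimensional cell, ruling out a cover by cells of strictly smaller dimension. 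The main obstacle is precisely this purity half: both the existence of a finite tropical basis and the dimension‑preservation of $\mathrm{in}_{\mathbf{w}}(\cdot)$ rest on substantial Gröbner‑theoretic commutative algebra well beyond what the text develops, so in practice I would invoke these as black boxes from \cite[Section~3.3]{tropicalbook} and keep my own explicit work to the transparent hypersurface case.
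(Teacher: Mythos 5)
The paper offers no proof of this theorem: it simply cites \cite[Theorem 3.2.3.(2) and Lemma 3.2.10]{tropicalbook} (and \cite{Bieri1984}). Your sketch correctly reproduces the outline of the argument in that reference --- the hypersurface case via the codimension-$\geq 1$ skeleton of the regular subdivision dual to $\mathrm{trop}(f)$, the reduction to finitely many hypersurfaces via a tropical basis obtained from the Gr\"obner complex, and purity via the Bieri--Groves mechanism --- and you are right to flag the finiteness of the Gr\"obner complex, the constancy of initial ideals on its relatively open cells, and the dimension preservation $\dim K[\mathbf{x},\mathbf{x}^{-1}]/\mathrm{in}_{\mathbf{w}}(I)=\dim K[\mathbf{x},\mathbf{x}^{-1}]/I$ as substantive inputs that neither you nor the paper develop; deferring to \cite[Section 3.3]{tropicalbook} for these is exactly what the paper does.
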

\begin{proof}
See \cite[Theorem 3.2.3.(2). and Lemma 3.2.10]{tropicalbook}. An early version of this theorem can also be found in \cite[Theorem A]{Bieri1984}.  
\end{proof}

\begin{exa}\label{ExplicitComplex}
Let us show that the tropicalization in Examples \ref{PlaneCurve} and \ref{TropicalizationExample1} are $\mathbb{Q}$-rational polyhedral complexes by explicitly giving the matrices $A$ and the vectors $\mathbf{b}$ as in Equation \ref{Polyhedron}. We will use $x_{1}$ and $x_{2}$ as coordinates in $\mathbb{R}^{2}$. 

We have three linear pieces $S_{i}\subset{\mathbb{R}^{2}}$.  $S_{1}$ is defined by the equations $x_{1}=3$ and $x_{2}\geq{2}$, $S_{2}$ is defined by $x_{2}=2$ and $x_{1}\geq{3}$ and $S_{3}$ is defined by $x_{2}=x_{1}-1$ and $x_{2}\leq{2}$ and $x_{1}\leq{3}$. The corresponding matrices $A_{i}$ and vectors $\mathbf{b}_{i}$ are then given by:
\begin{align}
A_{1}=&
\begin{pmatrix}
    1 & 0 \\
    -1 & 0\\
    0 & -1
\end{pmatrix},
{\mathbf{b}}_{1}=
\begin{pmatrix}
3\\
-3\\
-2
\end{pmatrix},\\
A_{2}=&
\begin{pmatrix}
    0 & 1 \\
    0 & -1\\
    -1 & 0
\end{pmatrix},
{\mathbf{b}}_{2}=
\begin{pmatrix}
2\\
-2\\
-3
\end{pmatrix},\\
A_{3}=&
\begin{pmatrix}
    -1 & 1 \\
    1 & -1\\
    0 & 1
\end{pmatrix},
{\mathbf{b}}_{3}=
\begin{pmatrix}
-1\\
1\\
2
\end{pmatrix}.
\end{align}

Note that this polyhedral complex is pure of dimension one, which makes sense in view of Theorem \ref{TropComplex} since $V(I)$ defines an irreducible curve. 


\end{exa}

\subsection{Lattice lengths and numerically faithful tropicalizations}\label{NumericallyFaithful}

In this section we study the case of tropical curves in more detail. Using the structure theorem from the last section, we define the lattice length for any segment of a tropical curve. We then specialize to elliptic curves and define the notion of a {\it{numerically faithful tropicalization}} using the $j$-invariant of the elliptic curve. In the next section, we will relate this to the notion of a {\it{faithful tropicalization}}, as introduced and studied in \cite[Section 5.15]{BPR11}.   

We start with the definition of a tropical curve associated to an algebraic curve. 
\begin{mydef}
{\bf{[Tropical curves]}}
Let $X\subset{(K^{*})^{n}}$ be an irreducible curve. We define the tropical curve associated to $X$ to be its tropicalization $\mathrm{trop}(X)$ as in Definition \ref{DefinitionTropicalHypersurface}. 
\end{mydef}

By Theorem \ref{TropComplex}, we have that $\mathrm{trop}(X)$ is the support of a $\Gamma$-rational polyhedral complex, pure of dimension one. If we have a bounded edge $e$ in $\mathrm{trop}(X)$, then it belongs to some polyhedral cone $P$ defined by a matrix $A$ with values in $\mathbb{Z}$ and a vector $\mathbf{b}$ with values in $(\Gamma)^{n}$.  The directional vector $\mathbf{v}$ corresponding to $e$ then lies in the polyhedral cone $P'$ defined by $A\mathbf{x}\leq{0}$. 

We now recall the {\it{fundamental theorem of polyhedral geometry}}, also known as the Minkowski-Weyl theorem. Let $L$ be a subfield of the real numbers (such as $\mathbb{Q}$). The fundamental theorem then says that a subset of $L^{n}$ is defined by linear inequalities $A\mathbf{x}\leq{0}$ (where $A$ is a matrix with entries in $L$) if and only if it is equal to the cone generated by a finite set of vectors in $L^{n}$. Here the cone generated by a finite set of vectors $Y=\{y_{1},...,y_{k}\}\subset{L^{n}}$ is given by
\begin{equation}
\mathrm{cone}(Y)=\{\lambda_{1}y_{1}+...+\lambda_{k}y_{k}:\lambda_{i}\geq{0}\}.
\end{equation}

We direct the reader to \cite[Theorem 1.2]{Ziegler1995} for a proof of the fundamental theorem. We note that it is usually only stated and proved for the field of real numbers $\mathbb{R}$, but the result holds for any subfield $L$ of $\mathbb{R}$. Indeed, the Fourier-Motzkin method described in \cite[Section 1.2]{Ziegler1995} only uses the ordering on $L$ (and not any completeness properties), so the same proof works over $L$.  

We now apply this theorem with $L=\mathbb{Q}$ to the polyhedral cone $P'$ defined by $A\mathbf{x}\leq{0}$. Using the fact that $P'$ is one-dimensional, we see that $P'$ is equal to the cone generated by a single vector $\mathbf{v}$ in $\mathbb{Q}^{n}$. There is then a unique primitive  
$\mathbb{Z}$-valued vector $\mathbf{w}\in{P'}$ and we can use this vector $\mathbf{w}$ 
 to define the {\it{lattice length}} of $e$.

\begin{mydef}\label{LatticeLength}
{\bf{[Lattice length for tropical curves]}} 
Let $e=PQ$ be a finite edge in the tropicalization $\mathrm{trop}(X)$ of a curve $X\subset{(K^{*})^{n}}$ with directional vector $\mathbf{v}$. Let $\mathbf{w}$ be the unique primitive $\mathbb{Z}$-valued vector in the direction of $\mathbf{v}$ as above. 
Writing $\mathbf{v}=\lambda\cdot{\mathbf{w}}$ for some $\lambda\in\mathbb{R}_{\geq{0}}$, we then define $\ell(e)=\lambda$. This is referred to as the {\bf{lattice length}} of $e$. 
\end{mydef}

\begin{exa}\label{MainExample}{\bf{[Main Example]}}
Consider the polynomial $f=x^2y+xy+xy^2+\varpi^{k}$ with $v(\varpi)=1$ and $k$ some positive integer. 
\begin{figure}[h!]
\centering
\includegraphics[width=7cm]{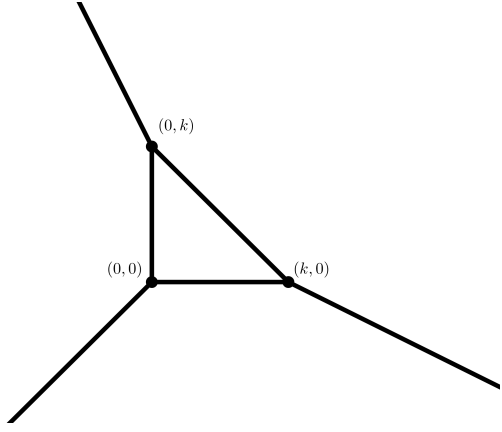}
\caption{\label{Plaatje4} The tropicalizations of the curves in Example \ref{MainExample}.}%
\end{figure}
The tropical polynomial associated to $f$ is   
\begin{equation}
\mathrm{trop}(f)=\mathrm{min}\{2x+y,x+y,x+2y,k\}.
\end{equation}
The tropicalization $\mathrm{trop}(V(f))$ then contains a triangle with vertices $(0,0)$, $(0,k)$ and $(k,0)$, see Figure \ref{Plaatje4}. The {\it{ordinary length}} of the edge between $(0,k)$ and $(k,0)$ is $\sqrt{2}\cdot{k}$. The {\it{lattice length}} of this edge is just $k$, since the primitive directional vector in the direction of $e$ is $\pm(1,-1)$. 
\end{exa}


We now define the notion of a cycle. 
\begin{mydef}\label{Cycle}
A {\it{cycle}} inside the tropicalization $\mathrm{trop}(C)$ of an irreducible curve $C\subset{(K^{*})^{n}}$ is a finite set of bounded edges inside $\mathrm{trop}(C)$ that form a leafless, connected subgraph of Betti number one. Here a leaf is an edge connected to a vertex of valence one. The {\it{length}} of the cycle is the sum of the lattice lengths (as defined in Definition \ref{LatticeLength}) of the edges in the subgraph.
\end{mydef} 
\begin{exa}\label{MainExample2}
Let $C$ be the curve defined by $f=x^2y+xy+xy^2+\varpi^{k}=0$, as in Example \ref{MainExample}, which defines an elliptic curve $\overline{C}$. In Section \ref{FinalSection}, we will see a Weierstrass form of this equation.   
By the calculations in Example \ref{MainExample}, the corresponding tropicalization contains a triangle with edges of length $k$. This thus defines a cycle inside $\mathrm{trop}(C)$ of total length $3k$. 
\end{exa}

\begin{exa}\label{WeierstrassTropExample}

Consider an elliptic curve given by a Weierstrass equation 
\begin{equation}
y^2=x^3+Ax+B. 
\end{equation}
\begin{figure}[h!]
\centering
\includegraphics[width=9cm]{{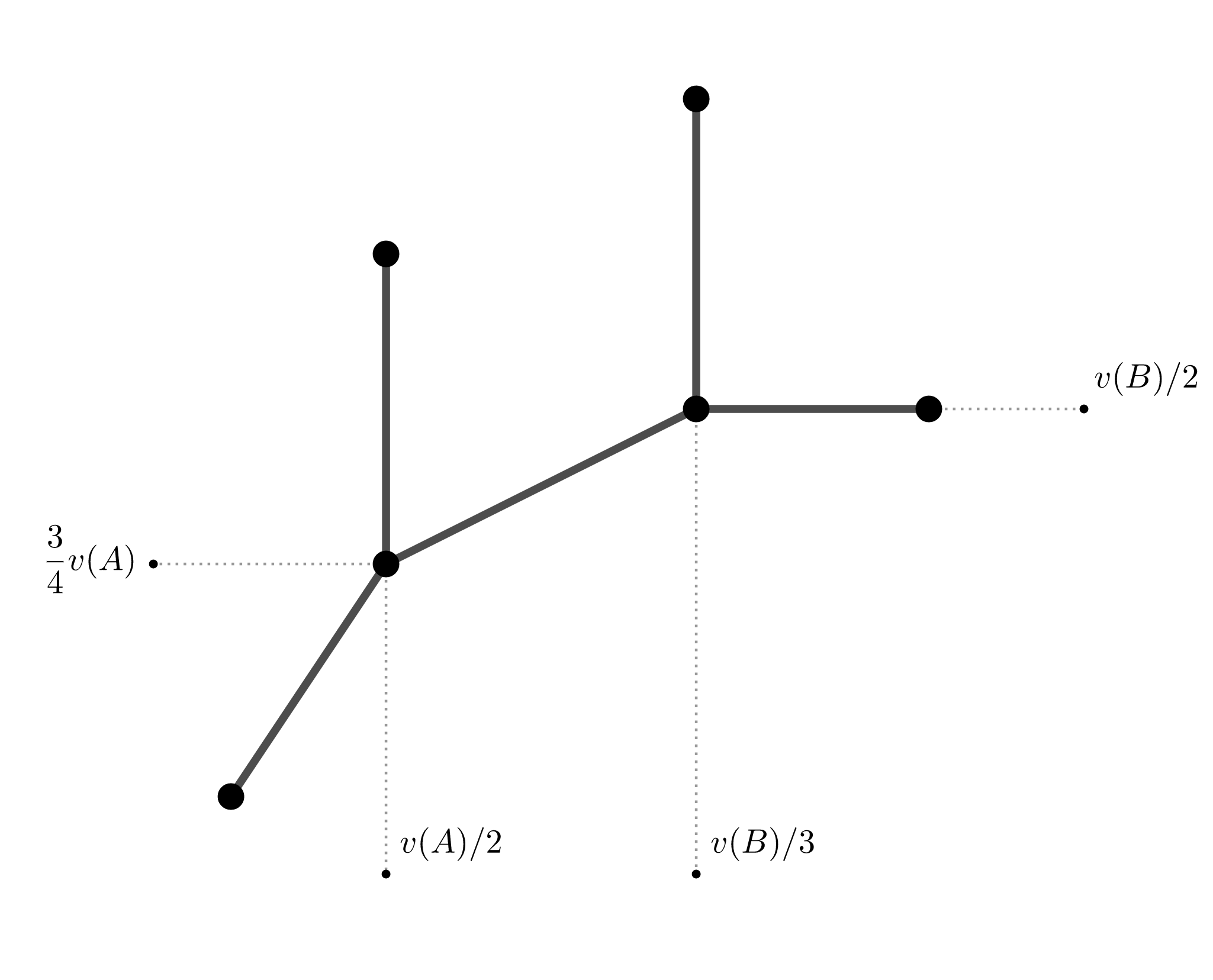}}
\caption{\label{Plaatje547} The tropicalizations in Example \ref{WeierstrassTropExample} of elliptic curves in Weierstrass form with $v(A)/2<v(B)/3$. The dotted lines highlight the $x$ and $y$-coordinates. }%
\end{figure}

We denote the corresponding curve in $(K^{*})^{2}$ by $C$. There are then two options for the tropicalization of $C\subset{(K^{*})^{2}}$.   For $v(A)/2<v(B)/3$, we have that $\mathrm{trop}(C)$ consists of five line segments as in Figure \ref{Plaatje547}. Explicitly, they are given (from left to right) by: 
\begin{itemize}
\item The line $S_{1}$ given by:  $2y=3x$ for $x\leq{}v(A)/2$, $y\leq{3/4\cdot{}v(A)}$,
\item The line $S_{2}$ given by: $x=v(A)/2$ for $y>3/4\cdot{}v(A)$,
\item The line $S_{3}$ given by: $2y=v(A)+x$ for $v(A)/2<x<v(B)/3$,
\item The line $S_{4}$ given by: $x=v(B)/3$ for $y>v(B)/2$,
\item The line $S_{5}$ given by: $y=v(B)/2$ for $x>v(B)/3$. 
\end{itemize}

For $v(A)/2\geq{v(B)/3}$, the two vertical line segments $S_{2}$ and $S_{4}$ become one segment and $S_{3}$ reduces to a single point, as the reader can easily check. In this case, $\mathrm{trop}(C)$ consists of three line segments as in Figure \ref{Plaatje2}. In particular, we now see that the tropicalization of an elliptic curve in Weierstrass form never gives rise to a cycle. 
\end{exa}

We now specialize to elliptic curves. Recall from \cite[Chapter III, Proposition 1.4(b)]{Silv1} that to any elliptic curve $E/K$, we can associate  a number $j(E)\in{K}$ such that $j(E)=j(E')$ for two elliptic curves $E,E'$ if and only if $E$ and $E'$ are isomorphic. This number is known as the {\it{$j$-invariant}} of the elliptic curve. This $j$-invariant thus determines the isomorphism class of an elliptic curve over an algebraically closed field. If $K$ is not algebraically closed, then the statement becomes a little bit different, see \cite[Chapter X, Proposition 5.4]{Silv1}. If an elliptic curve $E$ is given in Weierstrass form $y^2=x^3+Ax+B$, then the $j$-invariant can be given explicitly by:  
\begin{equation}
j(E)=-1728\dfrac{(4A)^3}{\Delta},
\end{equation}
where $\Delta=-16(4A^3+27B^2)$ is the {\it{discriminant}} of the elliptic curve. 

Suppose now that we are given an irreducible curve $C\subset{(K^{*})^{n}}$. 
Then $C$ can be compactified to give a smooth curve $\overline{C}$ over $K$, see \cite[Chapter 1, Section 6]{Hart1}. If $\overline{C}$ has genus one, then we say that $C$ has genus one. Choosing a rational point on $\overline{C}$ (since $K$ is algebraically closed, there is no problem in finding such a point), we find that $\overline{C}$ is an elliptic curve and thus has a $j$-invariant $j(\overline{C})$. We can then associate a $j$-invariant $j(C)$ to such a curve $C$ by setting $j(C)=j(\overline{C})$. 

We now come to the key definition of a numerically faithful tropicalization. 

\begin{mydef}
{\bf{[Numerically faithful tropicalizations]}}
\label{NumericallyFaithfulDefinition}
Let $C\subset{(K^{*})^{n}}$ be an irreducible curve of genus one and let $\mathrm{trop}(C)$ be its tropicalization, as in Definition \ref{DefinitionTropicalHypersurface}. Suppose that $v(j(C))<0$. We say that $\mathrm{trop}(C)$ is a {\it{numerically faithful tropicalization}} if there is a cycle of of length $-v(j(C))$ in $\mathrm{trop}(C)$, where $j(C)$ is the $j$-invariant associated to $C$. Here the length is as defined in Definition \ref{LatticeLength}. 
\end{mydef}

\begin{exa}
Let $C$ again be an elliptic curve in Weierstrass form, as in Example \ref{WeierstrassTropExample}. We saw in that Example that the tropicalization never contains a cycle. We thus see that this tropicalization is never numerically faithful. 
\end{exa}

\begin{exa}
Let $f$ again be as in Examples \ref{MainExample} and \ref{MainExample2}. We will give an explicit formula for the $j$-invariant of the corresponding elliptic curve in our proof of Theorem \ref{MainTheorem}, see Equation \ref{JinvariantEquation}. A calculation then shows that $-v(j(C))=3k$, showing that it is a numerically faithful tropicalization.  
\end{exa}



\subsection{Faithful tropicalizations}\label{FaithfulTropSection}

We now come to the notion of a {\it{faithful tropicalization}}, as defined in \cite[Section 5.15]{BPR11}. To fully define a faithful tropicalization would take us too far into the field of Berkovich spaces and nonarchimedean geometry, so we will first give a summary of the necessary concepts together with references where the reader can learn more about them. After this, we will state the definition and work out some of the relevant notions in an example. 
We will then point out some key differences between a {\it{faithful}} tropicalization and a {\it{numerically faithful}} tropicalization. The upcoming material is not strictly necessary to understand the final result of this paper and is somewhat more technical, so the reader can skim through it on a first read-through. We will assume that $K$ is complete throughout this section. 

Let us start with a summary of some of the concepts necessary to define a faithful tropicalization:

\begin{itemize}
\item 
To any connected curve $X$ over a nonarchimedean field $K$, one can associate its so-called {\it{Berkovich analytification}} $X^{\mathrm{an}}$, as in \cite{berkovich2012}.  
It is a path-connected Hausdorff space that contains the rational points $X(K)$ as a dense open subset and locally looks like a {\it{metric tree}}.  
Some good references to learn more about Berkovich spaces are:  \cite{BakerRumely}, \cite{Jonsson1} and \cite{berkovich2012}.
\item Let $M\simeq{\mathbb{Z}^{n}}$ be a lattice, $N=\mathrm{Hom}(M,\mathbb{Z})$ its dual lattice and ${T}=\mathrm{Spec}(K[M])$ the corresponding torus, written as the spectrum of the group ring over $K$ corresponding to $M$. We write $N_{\mathbb{R}}:=N\otimes{\mathbb{R}}\simeq{\mathbb{R}^{n}}$ for the real vector space corresponding to $N$. 
Consider a toric variety $Y_{\Delta}$, defined by a polyhedral fan $\Delta$ in $N_{\mathbb{R}}$. For every polyhedral cone $\sigma$ in $\Delta$ there is a tropicalization map from $Y_{\sigma}^{\mathrm{an}}$ to the space of additive semigroup homomorphisms $\mathrm{Hom}(\sigma^{\vee}\cap{M},\mathbb{R}\cup{\{\infty\}})$ and we write $N_{\mathbb{R}}(\sigma)$ for the image of $Y_{\sigma}^{\mathrm{an}}$ under this map.  For an inclusion of cones $\tau\preceq{\sigma}$, this yields $N_{\mathbb{R}}(\tau)\subset{N_{\mathbb{R}}(\sigma)}$ and we write $N_{\mathbb{R}}(\Delta)$ for the space obtained by gluing along these inclusions induced by cones in $\Delta$ (see \cite[Definition 4.3]{BPR11} for the details). This then induces a 
{\it{generalized tropicalization map:}}
\begin{equation}\label{GeneralizedTrop}
\mathrm{trop}:Y_{\Delta}^{\mathrm{an}}\rightarrow{N_{\mathbb{R}}(\Delta)}.
\end{equation} 

In terms of the tropicalizations studied in the beginning of Section \ref{Tropicalizations}, these concepts translate as follows. 
The toric variety used there is  
$(K^{*})^{n}$, given in scheme-theoretic form by $Y_{\Delta}=\mathrm{Spec}(K[\mathbf{x},\mathbf{x}^{-1}])$, and $N_{\mathbb{R}}(\Delta)$ is just $\mathbb{R}^{n}$. The $K$-rational points of $Y_{\Delta}$ (which can be identified with $(K^{*})^{n}$) naturally embed into $Y^{\mathrm{an}}_{\Delta}$ and the generalized tropicalization map in Equation \ref{GeneralizedTrop} coincides on these $K$-rational points with the tropicalization map $\mathrm{val}(\cdot{})$ given in Equation \ref{NaiveMap}. For more background information regarding toric varieties and generalized tropicalization maps, we refer the reader to \cite[Chapter 6]{tropicalbook} and \cite[Sections 4 and 5]{BPR11}. Other good references for toric varieties are \cite{FultonToric} and \cite{Cox2011}.  

\item One can also use this generalized tropicalization map to define a tropicalization map for closed subschemes of a toric variety $Y_{\Delta}$. In the context of curves and their compactifications, this tropicalization map works as follows. Let $X$ be a smooth and connected curve over $K$ and let $\overline{X}$ be its smooth compactification as in \cite[Chapter 1, Section 6]{Hart1}. Let $\overline{X}\rightarrow{Y_{\Delta}}$ be a closed immersion of $\overline{X}$ into a toric variety $Y_{\Delta}$ with polyhedral fan $\Delta$ such that $\overline{X}$ meets the dense torus $T$ of $Y_{\Delta}$. 
This then induces a 
tropicalization map  
\begin{equation}\label{GlobalTropicalization}
\mathrm{trop}:\overline{X}^{\mathrm{an}}\rightarrow{N_{\mathbb{R}}(\Delta)}
\end{equation}
by taking the composition 
of the embedding and the generalized tropicalization map in Equation \ref{GeneralizedTrop}, 
see \cite[Definition 4.3]{BPR11}. 
We will use this map to define faithful tropicalizations of the proper curve $\overline{X}$. 



\item Let $\overline{\mathcal{X}}$ be a semistable $R$-model for $\overline{X}$. That is, it is an integral scheme $\overline{\mathcal{X}}$ with a flat proper morphism $\overline{\mathcal{X}}\rightarrow{\mathrm{Spec}(R)}$ and an isomorphism $\overline{\mathcal{X}}_{\eta}\simeq{\overline{X}}$ such that the special fiber $\overline{\mathcal{X}}_{s}$ is a semistable curve in the sense of \cite[Chapter 10, Definition 3.1]{liu2}. 

To any semistable $R$-model $\overline{\mathcal{X}}$ of $\overline{X}$, one can associate a {\it{skeleton}} as follows\footnote{In \cite{BPRa1} they use formal semistable $R$-models to define the skeleton, but \cite[Lemma 5.1]{ABBR1} tells us that the category of semistable models is equivalent to the category of formal semistable $R$-models by using the "completion functor", so there is no harm in using this category.}. The generic points of the special fiber $\overline{\mathcal{X}}_{s}$ give rise to type-$2$ points of the Berkovich analytification as in Theorem \cite[Theorem 4.6(1)]{BPRa1} using the reduction map. This set of type-$2$ points is known as a semistable vertex set and we denote it by $V$. The complement $X^{\mathrm{an}}\backslash{V}$ then decomposes as a disjoint union of open balls and generalized open annuli. We define the skeleton $\Sigma(\overline{X},V)$ to be the union of the skeleta of these generalized open annuli together with the set $V$, see \cite[Definition 3.3]{BPRa1}. 
\item There is a metric on the set $\mathbf{H}_{0}(\overline{X}^{\mathrm{an}})$ of skeletal points, which is the set of points of type $2$ and $3$ inside $\overline{X}^{\mathrm{an}}$. This metric is known as the {\it{skeletal metric}}. 
Explicit local formulas for this metric are given in \cite[Section 5.3]{BPRa1}. A {\it{finite subgraph}} of $\overline{X}^{\mathrm{an}}$ is then an isometric embedding $\Gamma\rightarrow\mathbf{H}_{0}(\overline{X}^{\mathrm{an}})$ of a finite connected metric graph $\Gamma$ into this set of skeletal points. 


\end{itemize}

To familiarize ourselves with some of the concepts given here, let us give an example of a skeleton using an explicit semistable model. We will work out the semistable model and the skeleton for the curve defined in Example \ref{MainExample}.  

\begin{exa}
\label{SkeletonSemistable}
Let $f=x^2y+xy+xy^2+\varpi^{k}$ be the polynomial from Examples \ref{MainExample} and \ref{MainExample2} defining a curve inside $(K^{*})^{2}$. We consider the (affine) model
\begin{equation}
\mathcal{C}=\mathrm{Spec}(R[x,y]/(f))
\end{equation}
over the valuation ring $R$ of $K$. The special fiber is then given by the spectrum of the tensor product $R[x,y]/(f)\otimes_{R}{k}=k[x,y]/(\overline{f})$, where $k$ is the residue field and $\overline{f}$ is the reduction of $f$ to the polynomial ring $k[x,y]$. We then have $\overline{f}=x^2+xy+xy^2$. This polynomial is reducible: $\overline{f}=xy(x+y+1)$. We now see that the special fiber contains three irreducible components given by the factors $x$, $y$ and $x+y+1$. The corresponding prime ideals in $\mathcal{C}$ are given by $\mathfrak{p}_{1}=(x,\varpi)$, $\mathfrak{p}_{2}=(y,\varpi)$ and $\mathfrak{p}_{3}=(x+y+1,\varpi)$. The corresponding components $\Gamma_{i}=V(\mathfrak{p}_{i})$ intersect each other transversally in a cyclic fashion: the component $\Gamma_{1}$ intersects $\Gamma_{2}$ in the maximal ideal $\mathfrak{n}_{1,2}=(x,y,\varpi)$, $\Gamma_{2}$ intersects $\Gamma_{3}$ in $\mathfrak{n}_{2,3}=(y,\varpi,x+1)$ and $\Gamma_{3}$ intersects $\Gamma_{2}$ in $(x,y+1,\varpi)$. The incidence graph (see \cite[Chapter 10, Definition 3.17]{liu2}) 
thus consists of a cyclic graph with three vertices and three edges, which already shows a strong connection with the tropicalization of $C$ calculated in Example \ref{MainExample2}. 

Using some birational geometry, this $\mathcal{C}$ can be considered as an open affine subset of a semistable model $\overline{\mathcal{C}}$ for the compactified $\overline{C}$, where the generic points of the nontrivial components of $\overline{\mathcal{C}}$ are all in this open affine $\mathcal{C}$. By Theorem \cite[Theorem 4.7]{BPRa1}, this gives a so-called {\it{semistable vertex set}} $V$ of $\overline{C}$ and the skeleton $\Sigma(\overline{C},V)$ is just the incidence graph of $\mathcal{C}$ (see \cite[Section 4.9]{BPRa1}). A quick calculation shows that the lengths of the ordinary double points are $k$ (with respect to $\varpi$, see \cite[Chapter 10, Definition 3.23]{liu2} for the definition), which again shows the strong connection with the tropicalization in Example \ref{MainExample2}.  
\end{exa}

\begin{mydef}
{\bf{[Faithful tropicalizations]}}
(See \cite[Section 5.15]{BPR11}) Let $X$ be a smooth, connected curve over a complete, algebraically closed field $K$ with smooth compactification $\overline{X}$ and consider a closed immersion $\overline{X}\rightarrow{Y_{\Delta}}$ of $\overline{X}$ into a toric variety $Y_{\Delta}$ with dense torus $T$.  Assume that $T\cap{\overline{X}}\neq\emptyset$ and consider the tropicalization map given in Equation \ref{GlobalTropicalization}. 
\begin{itemize}
\item We say that a finite subgraph $\Gamma$ of the Berkovich analytification $\overline{X}^{\mathrm{an}}$ is {\it{faithfully tropicalized}} by the tropicalization map $\mathrm{trop}:\overline{X}^{\mathrm{an}}\rightarrow{N_{\mathbb{R}}(\Delta)}$ if $\mathrm{trop}(\cdot{})$ maps $\Gamma$ homeomorphically and isometrically onto its image. 
\item We say that $\mathrm{trop}(\cdot{})$ is a faithful tropicalization of $\overline{X}$ if a skeleton of $\overline{X}$ is faithfully tropicalized. 
\end{itemize}
\end{mydef}

\begin{exa}\label{FaithfulTrop3}
It can be shown that any polynomial $f\in{K[x,y]}$ whose {\it{Newton complex}} (see \cite{Rabinoff2012} and \cite[Proposition 3.1.6]{tropicalbook}) is a unimodular triangulation gives rise to a faithful tropicalization. This is \cite[Corollary 5.28(2)]{BPR11}.
\begin{figure}[h!]
\centering
\includegraphics[width=6cm]{{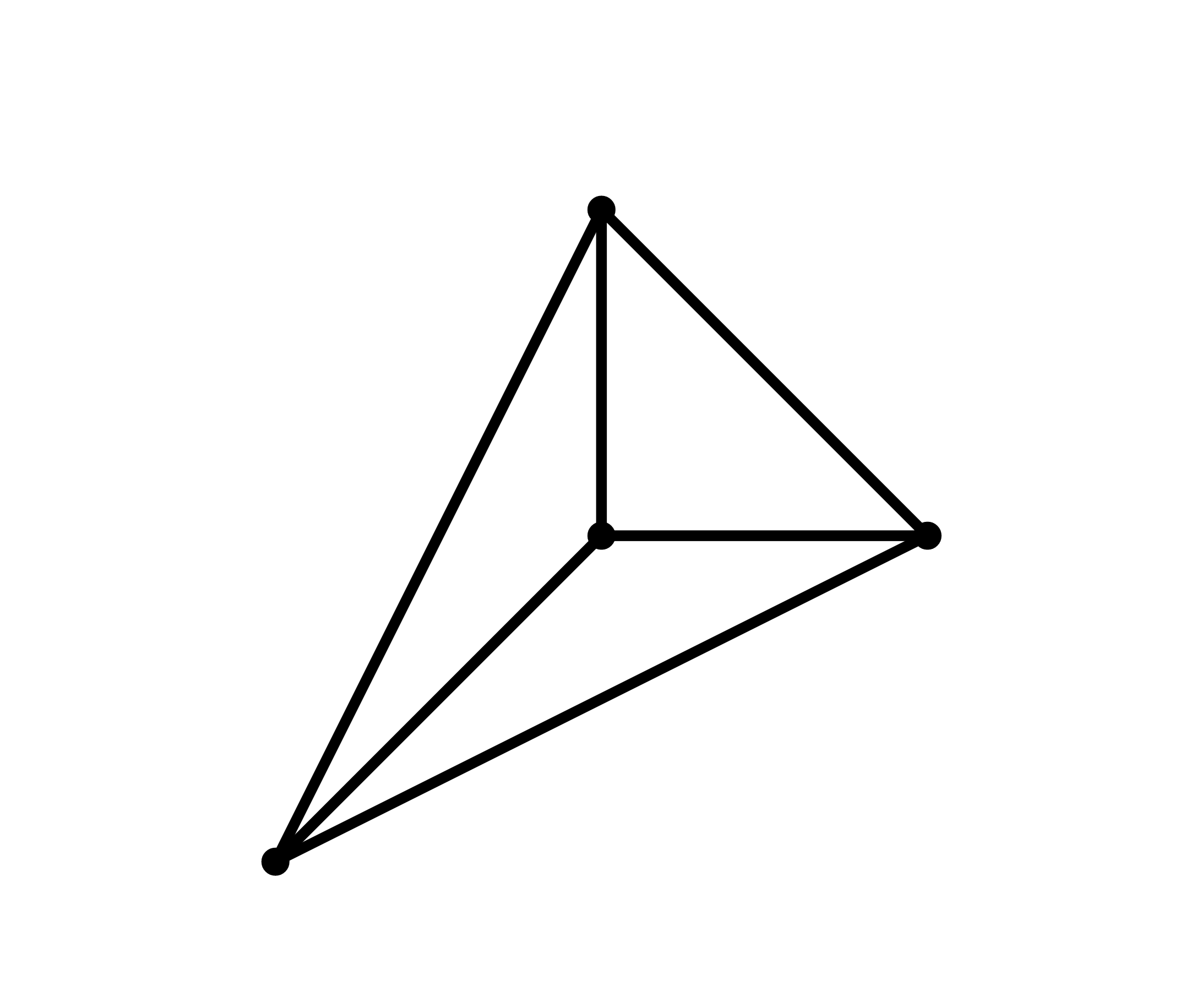}}
\caption{\label{Plaatje548} The Newton complex in Example \ref{FaithfulTrop3}.} 
\end{figure}
 A quick calculation shows that the Newton complex defined by $f=x^2y+xy+xy^2+\varpi^{k}$ is as in Figure \ref{Plaatje548}, which is a unimodular triangulation. This implies that the corresponding tropicalization $\mathrm{trop}(C)$ is faithful and we thus conclude that $v(j(C))=-3k$ from Example \ref{MainExample2}. In Theorem \ref{MainTheorem}, we will show that $v(j(C))=-3k$ without this result on faithful tropicalizations. 
\end{exa}

We now compare the notions of a numerically faithful tropicalization and a faithful tropicalization. 
First, a faithful tropicalization is automatically numerically faithful (if we adopt the more general form of tropicalizations using toric varieties as explained earlier). Indeed, if $v(j(E))<0$, then the Berkovich space contains a cycle with length $-v(j(E))$ and this subgraph is mapped isometrically (for the lattice length on $\mathrm{trop}(C)$) onto its image in $\mathrm{trop}(C)$, so $\mathrm{trop}(C)$ contains a cycle of length $-v(j(E))$. See the discussion in \cite[Section 6]{BPR11} for more details.  It is not true however that every numerically faithful tropicalization is faithful, as the following example shows. 

\begin{exa}\label{NonFaithfulExample}
We will use the slope formula from \cite[Theorem 5.15]{BPRa1} to calculate the abstract tropicalization as in \cite[Theorem 6.2]{BPR11}. Throughout this example we will be making heavy use of the results in \cite{BPR11} and \cite{BPRa1}, so we refer the reader to those papers for more information on these concepts. 
\begin{figure}[h!]
\centering
\includegraphics[width=8cm]{{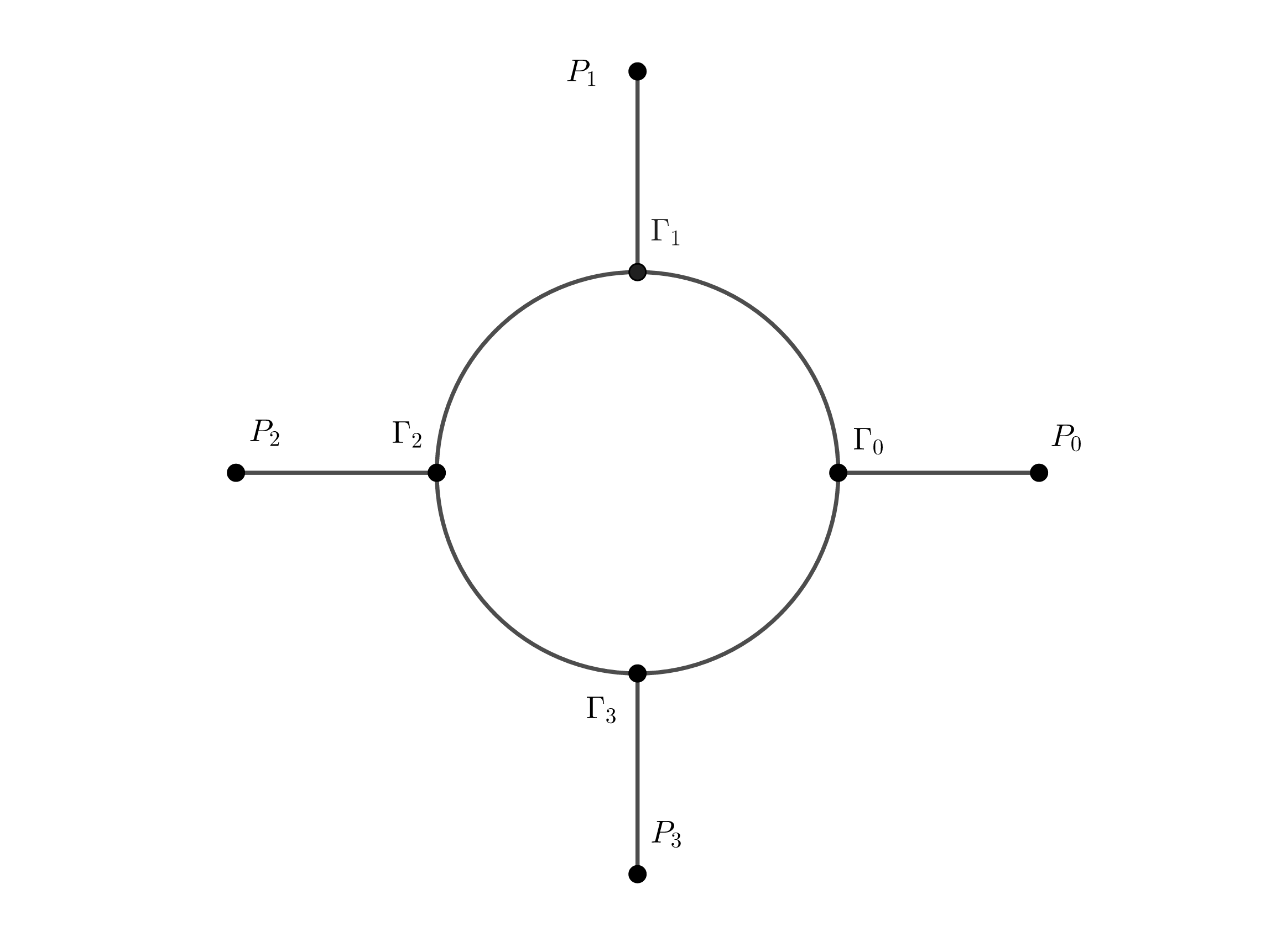}}
\caption{\label{Plaatje550} The Berkovich minimal skeleton in Example \ref{NonFaithfulExample}, together with a set of four-torsion points that retract nontrivially onto the skeleton.  }%
\end{figure}
Consider an elliptic curve $E$ with multiplicative reduction, that is: $v(j(E))<0$. Then $E$ can be analytically uniformized:
\begin{equation}\label{UniformizationEquation}
E^{\mathrm{an}}\simeq{\mathbb{G}^{\mathrm{an}}_{m}/q^{\mathbb{Z}}}
\end{equation}
for some $q\in{K^{*}}$ with $v(q)=-v(j(E))$. This is also known as the Tate uniformization.  

We now choose a fourth root $q^{1/4}$ of $q$ and we consider the four-torsion point $P\in{E(K)[4]}$ corresponding to it under the Tate uniformization above. 
We write $P_{i}=i\cdot{P}$ for the multiples of $P$ under the group law on $E(K)$. In particular we have that $P_{0}=P_{4}$ is the identity element. The minimal skeleton $\Sigma$ of $E$ is then a circle of length $-v(j(E))$ and there is a natural retraction map $E^{\mathrm{an}}\rightarrow{\Sigma}$ onto this skeleton. If we consider the natural map $K^{*}\rightarrow{E^{\mathrm{an}}}$ arising from the Tate uniformization, then the composite map $K^{*}\rightarrow{\Sigma}$ is just given by $z\mapsto{[\mathrm{val}(z)]}$, see the proof of \cite[Theorem 6.2]{BPR11}. 
We denote the type-$2$ points the points $P_{i}$ retract to by $\Gamma_{i}$. See Figure \ref{Plaatje550} for a pictorial description of this retraction.

Now consider the following divisors:
\begin{align*}
D_{1}&=6P_{2}-4P_{1}-2P_{3},\\
D_{2}&=2P_{3}-P_{2}-P_{0}.
\end{align*}
Since they have degree zero and add up to zero under the group law, we find that they are principal by \cite[Chapter III, Corollary 3.5]{Silv1}. We can thus write $\mathrm{div}(f_{i})=D_{i}$ for $f_{i}$ in the function field of $E$. We calculate the corresponding piecewise-linear functions on the skeleton $\Sigma$ using the slope formula. The result is in Figures \ref{Plaatje551} and \ref{Plaatje549}. We thus see that both $F_{i}=-\mathrm{log}{|f_{i}|}$ have slope zero on $\Gamma_{0}\Gamma_{1}$ and slope divisible by $2$ on $\Gamma_{1}\Gamma_{2}$. The slope of $F_{2}$ on $\Gamma_{2}\Gamma_{3}$ and $\Gamma_{3}\Gamma_{0}$ is $\pm{1}$, whereas the slope of $F_{1}$ is $-2$ on these edges. The slope of $F_{1}$ on $\Gamma_{1}\Gamma_{2}$ is $4$. 

\begin{figure}[h!]
\centering
\includegraphics[width=5cm]{{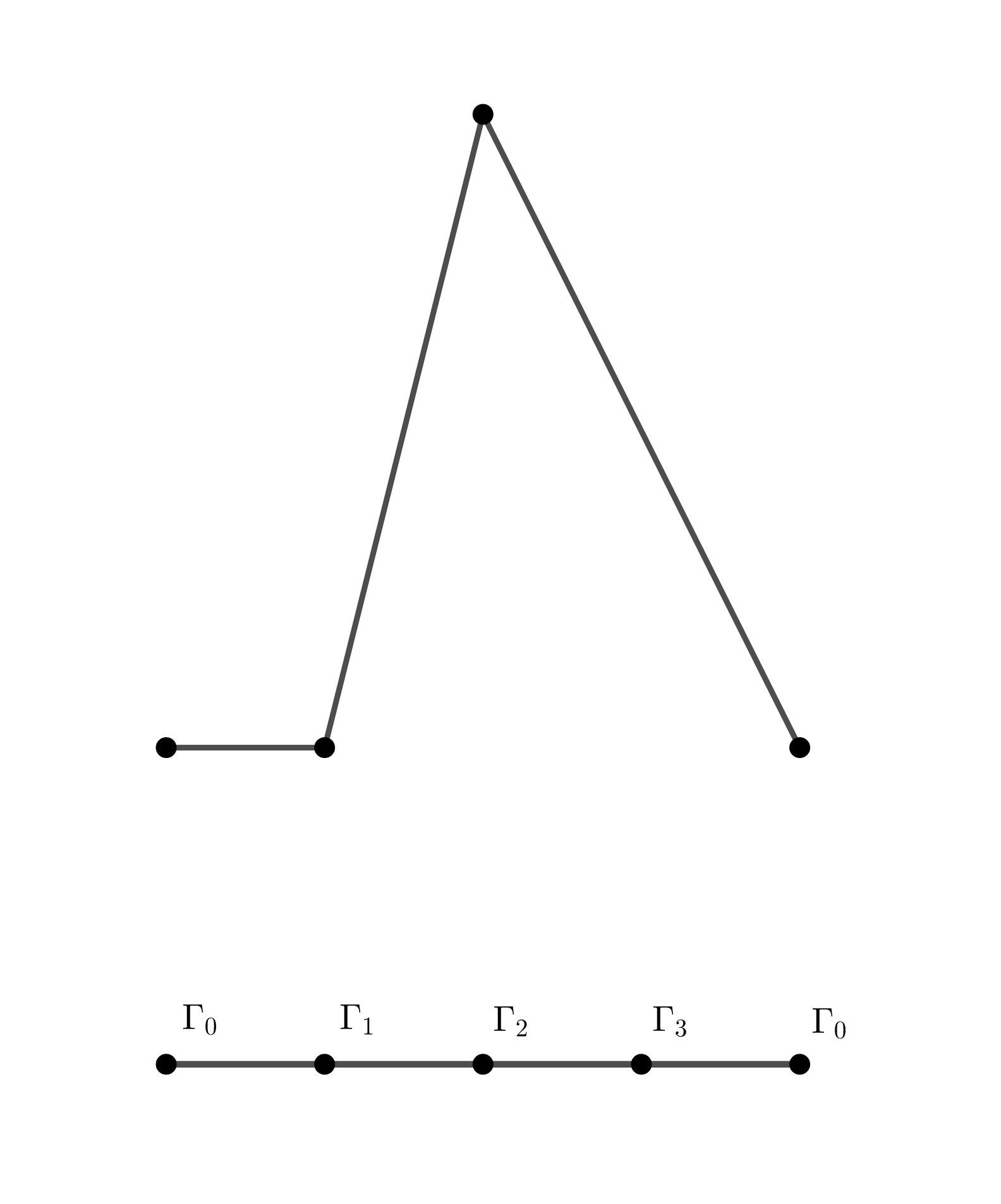}}
\caption{\label{Plaatje551} The piecewise linear function $F_{1}$ constructed in Example \ref{NonFaithfulExample}. }
\end{figure}

\begin{figure}[h!]
\centering
\includegraphics[width=7cm]{{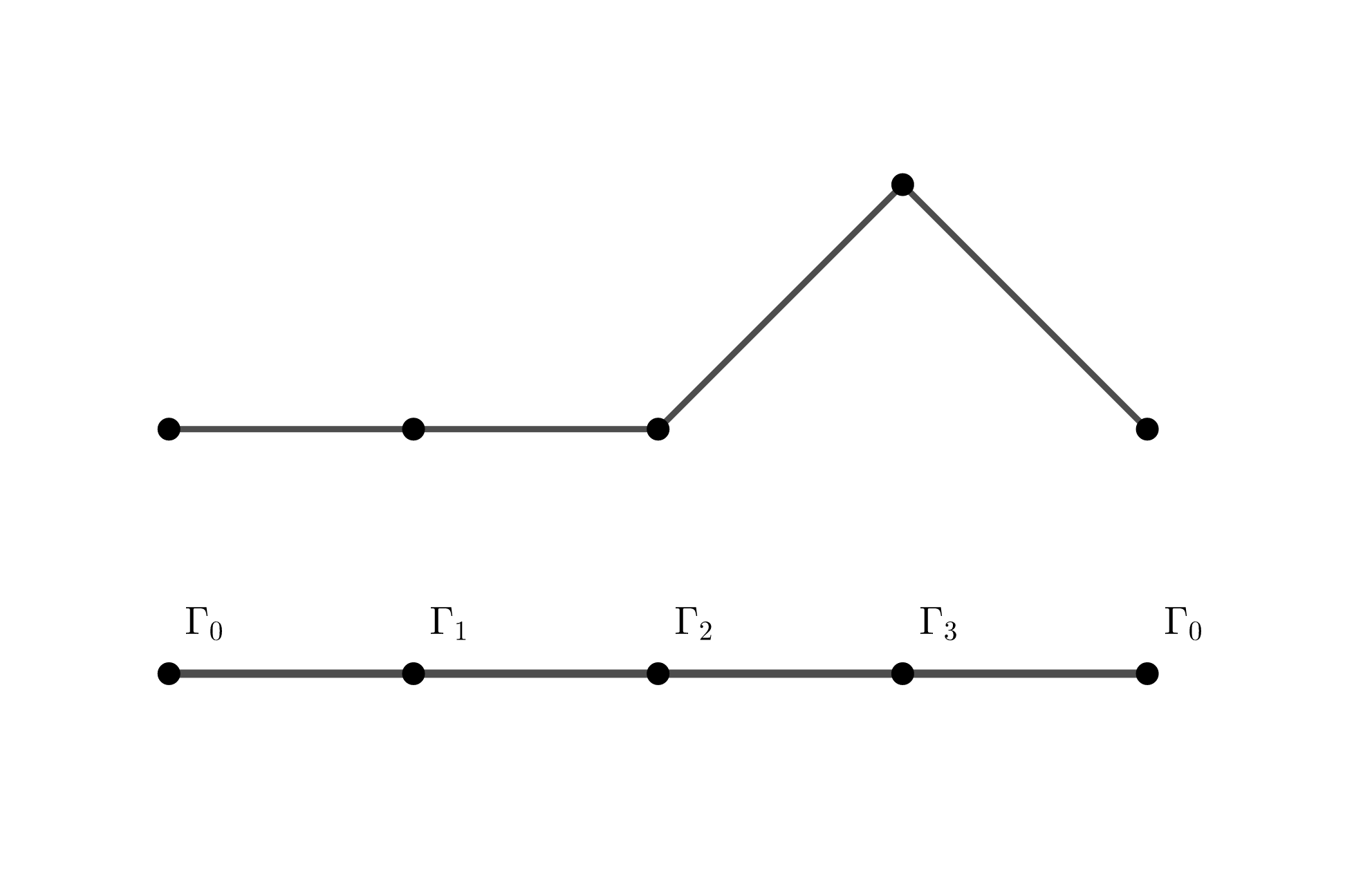}}
\caption{\label{Plaatje549} The piecewise linear function $F_{2}$ constructed in Example \ref{NonFaithfulExample}. }
\end{figure}

If we now consider the embedding given by $(f_{1},f_{2})$, it might not be a closed embedding. There are two ways out of this problem. Consider the union $Z$ of the supports of the $f_{i}$ inside $E$. Let $U=E\backslash{Z}$.  We can then define a generalized tropicalization map on $U$. To that end, we first define a map\footnote{To do this, it is enough to define a ring map $K[x,x^{-1},y,y^{-1}]\rightarrow{\mathcal{O}_{E}(U)}$, see \cite[Chapter 2, Proposition 3.25]{liu2}. This is given by mapping $x\mapsto{f_{1}}$ and $y\mapsto{f_{2}}$. It is well-defined because the $f_{i}$ are invertible on $U$. } $U\rightarrow{(K^{*})^{2}}$ using these $f_{i}$ and we then define an induced tropicalization map from $U^{\mathrm{an}}$ to $\mathbb{R}^{2}$, see \cite[Section 8]{SkeletonJacobian}.  
The tropicalization induced by $(f_{1},f_{2})$ in this sense then has the following properties. It collapses $\Gamma_{0}\Gamma_{1}$ to a point since both $F_{i}$ have slope zero on that segment. It expands $\Gamma_{1}\Gamma_{2}$ by a factor $2$, since the greatest common divisor of the slopes of the $F_{i}$ is $2$, see \cite[Remark 5.6]{BPR11}. On the remaining two edges the greatest common divisor is just one, so it is isometric there. This implies that the tropicalization contains a cycle of length $0+2\cdot{}(v(q)/4)+1\cdot{}(v(q)/4)+1\cdot{}(v(q)/4)=v(q)=-v(j(E))$, so it is numerically faithful. It is not faithful however, since one of the segments of the minimal skeleton is contracted and another is expanded.    

We can also modify the embedding to make it a closed embedding. The following idea can also be found in an earlier version of \cite{BPR11}. We first construct a closed embedding with a "trivial tropicalization" on the minimal skeleton. That is, we construct two functions whose piecewise linear functions are trivial on $\Sigma$, but still define a closed embedding. After that, we combine the two to make a closed embedding with the desired tropicalization. 

We can in fact use the argument in \cite[Theorem 6.2]{BPR11}, but with different points\footnote{The old version of \cite{BPR11} uses a mix of a two-torsion point and a three-torsion point. In our argument, we only use three-torsion points. }. Choose a primitive third root of unity $\zeta_{3}\in{K^{*}}$ and consider the point $T$ in $E(K)[3]$ that corresponds to it under the Tate uniformization. We will denote the multiples of $T$ by $T_{i}$ again, where $T_{1}=T$ and $T_{0}=T_{3}$ is the identity. For a pictorial description, see Figure \ref{Plaatje591}. 
\begin{figure}[h!]
\centering
\includegraphics[width=7cm]{{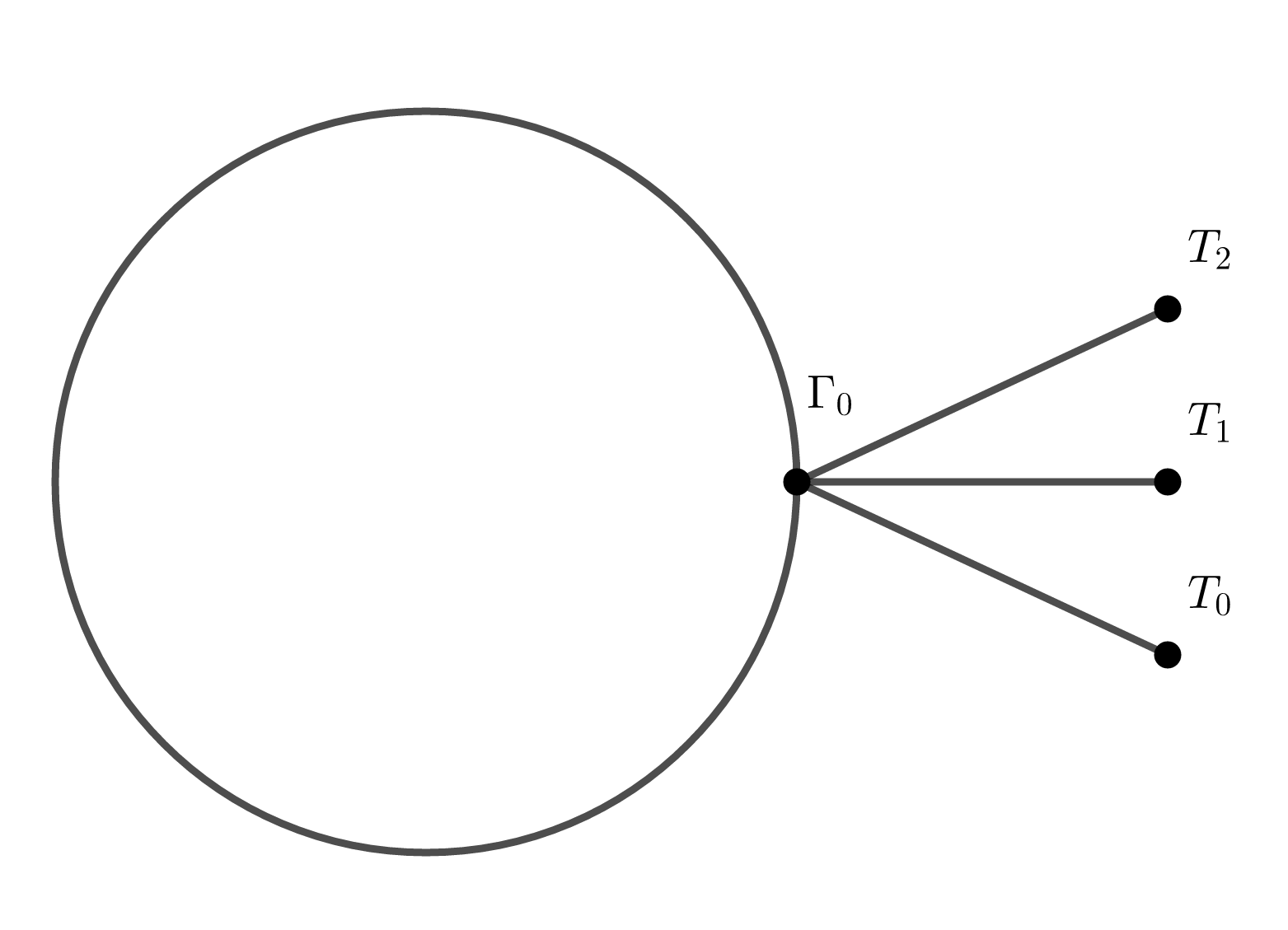}}
\caption{\label{Plaatje591} The minimal skeleton in Example \ref{NonFaithfulExample} with three-torsion points retracting to the same point on $\Sigma$.} 
\end{figure}

 The divisors $D_{3}=2T_{1}-T_{2}-T_{0}$ and $D_{4}=2T_{2}-T_{1}-T_{0}$ are principal and we can write $\mathrm{div}(f_{3})=D_{3}$ and $\mathrm{div}(f_{4})=D_{4}$. The functions $f_{3}$ and $f_{4}$ are then  global generating sections of the sheaf $\mathcal{O}_{E}(D)$ (see \cite[Section 7.1.2]{liu2}), where $D=T_{0}+T_{1}+T_{2}$. Furthermore, the corresponding map $E\rightarrow{\mathbb{P}^{2}}$ is a closed embedding since $\mathrm{deg}(D)>2$, see \cite[Chapter 7, Proposition 4.4(b)]{liu2}. Note that the piecewise linear functions of $f_{3}$ and $f_{4}$ are trivial on the skeleton $\Sigma$. Indeed, the $T_{i}$ all retract to the same point on $\Sigma$. We thus see that the corresponding tropicalization does not contain a cycle.

We now have three morphisms: $E\rightarrow{\mathbb{P}^{2}}$ arising from the pair $(f_{3},f_{4})$, and two morphisms $E\rightarrow{\mathbb{P}^{1}}$ arising from $f_{1}$ and $f_{2}$. 
Using the universal property of the product, we obtain a morphism $E\rightarrow{\mathbb{P}^{2}\times{\mathbb{P}^{1}}\times{\mathbb{P}^{1}}}$, which is a closed embedding because it is a closed embedding on the first factor. The embedding now tropicalizes as follows. First, note that the tropical functions $-\mathrm{log}|f_{3}|$ and $-\mathrm{log}|f_{4}|$ are constant on $\Sigma$, so their coordinates don't change. 
The tropicalization on the remaining coordinates is as before, so we conclude that there is an induced cycle of length $-v(j(E))$ and thus the tropicalization is numerically faithful. It is not faithful however, since 
one of the segments is contracted to a point and another is expanded.  

\end{exa}

\section{Minimal models over non-Noetherian valuation rings}\label{ModelsSection}

In this section, we give a reduction theory for elliptic curves over $K$, similar to the one studied in \cite[Chapter VII]{Silv1}. For simplicity, we will assume that $\mathrm{char}(k)\neq{2,3}$. In the general case, one can still write down minimal models using a variant of Tate's algorithm, see \cite[Chapter IV, Algorithm 9.4]{silv2}. In the discretely valued case, the most convenient way to study the reduction type of the elliptic curve $E$ is through its N\'{e}ron model $\mathcal{E}$. 
Since we are in the non-discrete case however, we cannot use this machinery. Furthermore, there is no direct generalization of N\'{e}ron models to the non-discrete case available at the present, so we will study the reduction of $E/K$ using the theory of minimal models, which does generalize to the non-discrete case. 

Let $E/K$ be an elliptic curve, as defined in \cite[Chapter III]{Silv1}. Using the Riemann-Roch theorem, one can show that every such elliptic curve can be described by a Weierstrass equation:
\begin{equation}
y^2+a_{1}xy+a_{3}y=x^3+a_{2}x^2+a_{4}x+a_{6},
\end{equation}
see \cite[Chapter III, Proposition 3.1]{Silv1}. By applying an appropriate scaling transformation, one can then assume that $v(a_{i})\geq{0}$ for every $i$. We will call such Weierstrass equations {\it{integral Weierstrass models}} or integral Weierstrass equations. For any such Weierstrass model, one obtains a reduced Weierstrass equation over $k$ by reducing the coefficients $a_{i}$ mod $\mathfrak{m}$. This reduced Weierstrass equation is not canonical in any sense however: two $K$-isomorphic models can lead to non-isomorphic reduced curves, as the following example shows.
\begin{exa}\label{NonCanonicalReductionExample}
Consider the integral Weierstrass equation 
\begin{equation}\label{NonCanonicalReduction}
W:y^2=x^3+\varpi^{4}x+\varpi^{6}.
\end{equation}
The reduced curve in this case is given by the equation
\begin{equation}\label{SingularCurveExample}
y^2=x^3,
\end{equation}
which defines a singular curve. We now consider an isomorphic curve whose reduction is nonsingular. Dividing by $\varpi^{6}$ on both sides of Equation \ref{NonCanonicalReduction} and taking $y'=\dfrac{y}{\varpi^{3}}$ and $x'=\dfrac{x}{\varpi^{2}}$, we obtain
\begin{equation}\label{EquationMinimalModelNonSingular}
y'^{2}=x'^3+x'+1.
\end{equation}
Reducing the coefficients mod $\mathfrak{m}$ then yields a nonsingular curve, which consequently is not isomorphic to the reduced curve in Equation \ref{SingularCurveExample}. 
\end{exa} 
We are thus led to impose an additional condition on the models over $R$ to ensure some kind of canonicity. The notion we will be using is that of a {\it{minimal model}}, as in \cite[Chapter VII]{Silv1}. 
\begin{mydef}
\label{MinimalModelDefinition}
Let $E/K$ be an elliptic curve with integral Weierstrass equation
\begin{equation}
W:y^2+a_{1}xy+a_{3}y=x^3+a_{2}x^2+a_{4}x+a_{6}.
\end{equation}
and discriminant $\Delta\in{K}$ (see \cite[Chapter III, Page 42]{Silv1} for an explicit formula). 
Then $W$ is said to be {\it{minimal}} if $v(\Delta)$ is minimal among all integral Weierstrass equations for $E$. We refer to such a model as a {\it{minimal Weierstrass model}} and we denote it by $W/R$. 
\end{mydef}

\begin{exa}
Let $W$ be the Weierstrass equation given by $y^2=x^3+\varpi^{4}x+\varpi^{6}$ as in Example \ref{NonCanonicalReduction}. Then the discriminant of $W$ is given by $\Delta_{W}=-496\cdot{\varpi^{12}}$ and  $v(\Delta_{W})=12$. The Weierstrass equation $W'$ in Equation 
\ref{EquationMinimalModelNonSingular} is also a Weierstrass equation for the same elliptic curve, but it has $v(\Delta_{W'})=0$, so $W$ is not minimal. Note that $W'$ is automatically minimal, since the valuation of the discriminant cannot become any smaller for integral Weierstrass equations. 
\end{exa}

Using our assumption on the residue field, we now give the following convenient criterion for an integral Weierstrass model $W$ to be minimal. First recall that for any field of characteristic not equal to $2$ or $3$, any minimal Weierstrass model is isomorphic to one of the form
\begin{equation}
y^2=x^3-27c_{4}x-54c_{6}.
\end{equation}
Indeed, the transformations
\begin{align}
y\longmapsto{\dfrac{1}{2}(y-a_{1}x-a_{3})}
\end{align}
and 
\begin{equation}
(x,y)\longmapsto{(\dfrac{x-3b_{2}}{36},\dfrac{y}{108})}
\end{equation}
on \cite[Pages 42 and 43]{Silv1} are invertible over $R$ and the valuations of their discriminant are the same by the tables on \cite[Page 45]{Silv1}.  

\begin{lma}\label{CriterionMinimality}
Let $W/R$ be a Weierstrass equation for an elliptic curve $E/K$. Then $W/R$ is a minimal model if and only if 
\begin{equation}
\mathrm{min}\{v(c_{4}),v(c_{6})\}=0.
\end{equation}
\end{lma}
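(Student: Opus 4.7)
The plan is to reduce the problem to the short Weierstrass form $y^2 = x^3 - 27c_4 x - 54c_6$ and then analyze how the invariants transform under admissible changes of coordinates. Because $\mathrm{char}(k) \neq 2, 3$, the two transformations recalled immediately before the lemma are $R$-invertible and preserve $v(\Delta)$, so every integral Weierstrass model for $E$ is $R$-isomorphic to one in short form with the same $v(\Delta)$. Hence $W$ is minimal if and only if no integral short form for $E$ has strictly smaller $v(\Delta)$, and I may assume $W$ itself is already in the short form above.

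I would then combine two ingredients. First, the standard relation $1728\,\Delta = c_4^3 - c_6^2$ together with $v(1728)=0$ gives $v(\Delta) = v(c_4^3 - c_6^2)$. Second, two short Weierstrass models for $E$ differ by a pure rescaling $(x,y) \mapsto (u^2 x, u^3 y)$ with $u \in K^*$, and under this $c_4 \mapsto u^{-4} c_4$, $c_6 \mapsto u^{-6} c_6$, and $\Delta \mapsto u^{-12}\Delta$. Integrality of the rescaled model is therefore equivalent to $v(u) \leq v(c_4)/4$ and $v(u) \leq v(c_6)/6$, and since $K$ is algebraically closed the value group is divisible, so every real number meeting these bounds is realized as $v(u)$ for some $u \in K^*$. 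Putting these together, $v(\Delta') = v(\Delta_W) - 12\,v(u)$ is minimized precisely when $v(u) = \min\{v(c_4)/4, v(c_6)/6\}$, so $W$ is minimal if and only if this maximum equals $0$, which (using $c_4,c_6 \in R$) is equivalent to $\min\{v(c_4), v(c_6)\} = 0$.

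The most delicate step is the reduction to short form. One must check that the normalizing transformations $y \mapsto (y - a_1 x - a_3)/2$ and $(x,y) \mapsto ((x - 3b_2)/36, y/108)$ preserve both integrality and $v(\Delta)$; this is exactly where the assumption $\mathrm{char}(k) \neq 2, 3$ enters, since $2, 3, 36, 108$ must all be units in $R$. A small subtlety worth flagging is that when comparing $W$ to a general integral model $W'$ which is not already in short form, one should note that $c_4, c_6$ are polynomial expressions in the Weierstrass coefficients with integer coefficients and are invariant under the translation parameters $(r,s,t)$ in the general substitution, so the integrality constraints on $v(u)$ are exactly the ones coming from the pure short-form rescaling picture, and nothing is lost by restricting to short forms in the analysis.
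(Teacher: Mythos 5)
Your proof is correct and rests on the same two ingredients as the paper's: the transformation laws $c_4 \mapsto u^{-4}c_4$, $c_6 \mapsto u^{-6}c_6$, $\Delta \mapsto u^{-12}\Delta$, and the divisibility of the value group coming from $K$ being algebraically closed. The only difference is organizational — you characterize the minimum of $v(\Delta')$ over all integral models in one optimization step, whereas the paper runs two separate contradiction arguments for the two implications (and, along the way, observes that the ``$\min=0 \Rightarrow$ minimal'' direction does not actually require the residue-characteristic hypothesis, since $c_4, c_6$ transform by $u^4, u^6$ under an arbitrary admissible change of coordinates without any reduction to short form).
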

\begin{proof}
Suppose that either $v(c_{4})=0$ or $v(c_{6})=0$ and suppose for a contradiction that there exists an integral Weierstrass equation $W'/R$ with $v(\Delta')<v(\Delta)$. We then have
\begin{align}\label{EquationMinimal}
u^{4}\cdot{}c'_{4}&=c_{4}\\
u^{6}\cdot{}c'_{6}&=c_{6}.
\end{align}
for a standard transformation relating $W'$ and $W$ as in \cite[Page 44]{Silv1} (every isomorphism is of this form by \cite[Chapter III, Proposition 3.1]{Silv1})). But $v(u)>0$ (since $v(\Delta')<v(\Delta)$), so either $v(c'_{4})<0$ or $v(c'_{6})<0$, a contradiction. Note that this proof doesn't use the assumption on the characteristic.

Suppose now that $W/R$ is a minimal model, which we can assume to be of the form
\begin{equation}
y^2=x^3-27c_{4}x-54c_{6}
\end{equation}
by our assumption on the residue characteristic. Suppose that $v(c_{4}),v(c_{6})>0$ and consider 
\begin{equation}
m:=\mathrm{min}\{v(c_{4})/4,v(c_{6})/6\}.
\end{equation}
Let $u$ be any element with valuation $m$ (which exists because $K$ is algebraically closed) and consider the transformation
\begin{align*}
x&=u^2\cdot{}x',\\
y&=u^3\cdot{}y'.
\end{align*}
By Equation \ref{EquationMinimal}, we see that $v(c'_{4}),v(c'_{6})\geq{0}$ (in fact, one of them has to be zero) and by 
\begin{equation}
u^{12}\Delta'=\Delta,
\end{equation}
we see that $v(\Delta')<v(\Delta)$, a contradiction. This proves the lemma.
\end{proof}
\begin{rem}\label{RemarkMinimality}
Note that the proof of Lemma \ref{CriterionMinimality} gives an explicit way of determining a minimal Weierstrass model: we take any integral equation and determine the $c_{i}$. By applying the transformation in the proof, we then immediately obtain a minimal model. 
\end{rem}
\begin{rem}
We note that there exist minimal Weierstrass models over valued fields with residue characteristic $2$ and $3$ with $v(c_{4})>0$ and $v(c_{6})>0$. As an example, let $K=\mathbb{Q}_{2}$ be the field of $2$-adic numbers and consider the elliptic curve given by 
\begin{equation}
y^2+y=x^3.
\end{equation}
This curve has good reduction, so $v(\Delta)=0$. We then have $c_{4}=0$ and $c_{6}=-216$, so both of the invariants have strictly positive valuation. 
\end{rem}

\begin{prop}\label{MinimalModelsBasic}
Let $E/K$ be an elliptic curve. Then the following hold.
\begin{itemize}
\item $E$ has a minimal Weierstrass model $W/R$. 
\item A minimal Weierstrass model is unique up to a change of coordinates
\begin{align}\label{CoordinateChange}
x&=u^2x'+r\\
y&=u^3y'+u^2sx'+t,
\end{align}
with $u\in{R^{*}}$ and $r,s,t\in{R}$.
\item Let $W/R$ be an integral Weierstrass equation. Then any change of coordinates \begin{align}
x&=u^2x'+r\\
y&=u^3y'+u^2sx'+t
\end{align}
that turns $W/R$ into a minimal Weierstrass model $W'/R$ satisfies $u,r,s,t,\in{R}$. 
\end{itemize}
\end{prop}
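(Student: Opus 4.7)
The plan is to address the three claims in order, relying on Lemma \ref{CriterionMinimality} and on the transformation formulas between Weierstrass equations tabulated in \cite[p.~44]{Silv1}.

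For existence, I would start from an arbitrary Weierstrass equation for $E/K$. Since $K$ is algebraically closed its value group is divisible, so choosing $u \in K^*$ with $v(u) \le \min_i v(a_i)/i$ and applying $(x,y) \mapsto (u^2 x, u^3 y)$ yields an integral Weierstrass equation. Next, I would use the two transformations of \cite[pp.~42--43]{Silv1} to put this equation into the short form $y^2 = x^3 - 27 c_4 x - 54 c_6$; these are invertible over $R$ since $\mathrm{char}(k) \neq 2,3$, so integrality is preserved. If $\min\{v(c_4),v(c_6)\} = 0$, the resulting model is already minimal by Lemma \ref{CriterionMinimality}. Otherwise, set $m = \min\{v(c_4)/4, v(c_6)/6\} > 0$, choose $u' \in K^*$ with $v(u') = m$ (possible by divisibility of the value group), and apply $(x,y) \mapsto (u'^2 x, u'^3 y)$ once more; the new invariants are $c_4/u'^4$ and $c_6/u'^6$, both in $R$, and now at least one has valuation zero, so Lemma \ref{CriterionMinimality} again gives minimality.

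For uniqueness, let $W$ and $W'$ be two minimal models of $E/K$. By \cite[Chapter III, Proposition 3.1]{Silv1} they are related by a standard transformation $x = u^2 x' + r$, $y = u^3 y' + u^2 s x' + t$ with $u \in K^*$ and $r,s,t \in K$, and the discriminants obey $u^{12}\Delta' = \Delta$. Minimality of both models forces $v(\Delta) = v(\Delta')$, hence $v(u) = 0$ and $u \in R^*$. To control $r,s,t$ I would use the first three transformation formulas from \cite[p.~44]{Silv1}: the identity $u a_1' = a_1 + 2s$ forces $2s \in R$ and hence $s \in R$ since $2 \in R^*$; the identity $u^2 a_2' = a_2 - s a_1 + 3r - s^2$ then forces $3r \in R$ and hence $r \in R$ since $3 \in R^*$; and finally $u^3 a_3' = a_3 + r a_1 + 2t$ forces $2t \in R$ and hence $t \in R$. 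The residue characteristic hypothesis enters precisely at these steps.

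The third claim follows from essentially the same argument, with one adjustment at the start. Given an integral $W$ and a minimal $W'$, the relation $u^{12}\Delta' = \Delta$ combined with $v(\Delta') \le v(\Delta)$ (the defining property of a minimal model applied to the integral model $W$) gives $12 v(u) \ge 0$, so $u \in R$. The same three transformation formulas for $a_1', a_2', a_3'$ then deliver $s, r, t \in R$ in the same order as before. I do not expect any significant obstacle in any of the three parts: all the conceptual input is Lemma \ref{CriterionMinimality} together with the identity $u^{12}\Delta' = \Delta$, and everything else is routine bookkeeping with the transformation tables using $2, 3 \in R^*$.
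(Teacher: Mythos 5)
Your proof is correct and follows essentially the same route as the paper: existence is established exactly as the paper indicates, via Lemma \ref{CriterionMinimality} and the constructive recipe of Remark \ref{RemarkMinimality} (this is the one place where the discrete-case argument of Silverman, which simply appeals to well-ordering of $\{v(\Delta)\}\subset\mathbb{Z}_{\ge 0}$, genuinely breaks down and must be replaced); and uniqueness/integrality of the coordinate change are handled by the same discriminant-and-coefficient bookkeeping as \cite[Chapter VII, Proposition 1.3]{Silv1}, to which the paper simply defers. One small remark on parts two and three: Silverman's cited argument is stated and proved without any restriction on the residue characteristic, and so pins down $r,s,t\in R$ by a somewhat more delicate integrality argument; you instead use the standing hypothesis $2,3\in R^\ast$ to read off $s$, then $r$, then $t$ directly from the $a_1',a_2',a_3'$ transformation rules. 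That is a legitimate and cleaner shortcut in the present setting, and the ordering you chose (first $s$ via $2\in R^\ast$, then $r$ via $3\in R^\ast$, then $t$ via $2\in R^\ast$) is exactly the right one. So this is the paper's proof with the Silverman reference unwound, plus a harmless simplification permitted by the residue characteristic hypothesis.
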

\begin{proof}
The first part follows from Lemma \ref{CriterionMinimality} and Remark \ref{RemarkMinimality}.  The second and third part follow in exactly the same way as in \cite[Chapter VII, Proposition 1.3]{Silv1}. Note that the proofs of these parts do not use the discreteness of the valuation, nor the completeness of $K$. We leave it to the reader to fill in the details. 
\end{proof}

Let $W/R$ be a minimal Weierstrass model given by
\begin{equation}
y^2+a_{1}xy+a_{3}y=x^3+a_{2}x^{2}+a_{4}x+a_{6},
\end{equation}
where $a_{i}\in{R}$. Using the natural reduction map $R\rightarrow{R/\mathfrak{m}=k}$, we can then consider the reduced Weierstrass equation
\begin{equation}
y^2+\overline{a}_{1}xy+\overline{a}_{3}y=x^3+\overline{a}_{2}x^{2}+\overline{a}_{4}x+\overline{a}_{6}.
\end{equation}

By Proposition \ref{MinimalModelsBasic}, any two minimal Weierstrass models $W$ and $W'$ are related by a coordinate change as in Equation \ref{CoordinateChange} with $u\in{R^{*}}$ and $r,s,t,\in{R}$. Reducing this coordinate change $\bmod{\mathfrak{m}}$, we obtain a standard coordinate change over the residue field. We thus see that any two minimal Weierstrass models give rise to isomorphic reduced curves. The reduced equation is thus independent of the minimal Weierstrass model, up to standard coordinate changes over the residue field. Note that the notion of minimality is crucial here, as the reduced curve of a non-minimal Weierstrass equation can be non-isomorphic to the reduced curve of a minimal Weierstrass equation, see Example \ref{NonCanonicalReductionExample}. The reduced curve associated to any minimal Weierstrass model $W/R$ will be denoted by $\overline{E}/k$ or $\overline{E}$.

We now give a reduction map $E(K)\rightarrow{\overline{E}(k)}$ in terms of projective coordinates. Write $P$ as $P=[x_{0},y_{0},z_{0}]$. By scaling these coordinates, we can find an equivalent triple such that at least one of $x_{0},y_{0},z_{0}$ is a unit. The reduced point
\begin{equation}
\overline{P}=[\overline{x}_{0},\overline{y}_{0},\overline{z}_{0}]
\end{equation}
then lies in $\overline{E}(k)$. This gives us the reduction map
\begin{equation}
E(K)\rightarrow{\overline{E}(k)}, \qquad{P}\mapsto{\overline{P}}.
\end{equation}

We now recall some facts regarding the reduced curve $\overline{E}/k$. This curve is singular if and only if $\overline{\Delta}=0$, where $\Delta$ is the discriminant associated to the Weierstrass equation. Furthermore, any Weierstrass equation can have only one singularity, which is either a cusp or a node. If it has a singularity, then we can put a group structure on the smooth points by \cite[Chapter III, Proposition 2.5]{Silv1}. There are three possible singularities and for each one we have a different group structure. We can in fact characterize the type of singularity we get by reducing the invariants $c_{i}$, as the following proposition shows. 
\begin{prop}\label{ReductionType}
Let $W/R$ be a minimal Weierstrass model for an elliptic curve $E/K$ with reduced curve $\overline{E}$. Let $\overline{E}_{\mathrm{ns}}$ be the set of nonsingular points. Then the following hold:
\begin{itemize}
\item $\overline{E}$ is an elliptic curve if and only if $v(\Delta)=0$. We have $\overline{E}=\overline{E}_{\mathrm{ns}}$. In this case, the elliptic curve $E$ is said to have {\it{good reduction}}.
\item $\overline{E}$ has a cusp if and only if $v(\Delta)>0$ and $\overline{c}_{4}=0$. We have $\overline{E}_{ns}\simeq{k^{+}}$, the additive group of the residue field $k$. In this case, the elliptic curve is said to have {\it{additive reduction}}. 
\item $\overline{E}$ has a node if and only if $v(\Delta)>0$ and $\overline{c}_{4}\neq{0}$. We have $\overline{E}_{ns}\simeq{k^{*}}$, the multiplicative group of the residue field $k$. In this case, the elliptic curve is said to have {\it{multiplicative reduction}}.

\end{itemize}
\end{prop}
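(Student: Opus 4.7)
The plan is to read off all three statements from the same underlying calculation: reduce the Weierstrass equation term-by-term modulo $\mathfrak{m}$ and use the standard invariants $c_4, c_6, \Delta$ to detect the type of singularity. The crucial input is that since $W/R$ is integral, the invariants $\overline{c}_4, \overline{c}_6, \overline{\Delta}$ of the reduced Weierstrass equation in $k[x,y]$ are exactly the reductions mod $\mathfrak{m}$ of $c_4, c_6, \Delta$ (the formulas on \cite[Page 42]{Silv1} are polynomial in the $a_i$ with integer coefficients). Together with $\mathrm{char}(k) \neq 2, 3$, this lets me replace the reduced curve, up to coordinate change over $k$, by its short form $y^2 = x^3 - 27\overline{c}_4 x - 54\overline{c}_6$.

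For the first bullet I would recall the elementary fact that a Weierstrass cubic over a field is smooth if and only if its discriminant is nonzero: after passing to the short form, the singular locus of $y^2 = f(x)$ lies on $y = 0$ and corresponds precisely to multiple roots of $f$, which vanish simultaneously with $\mathrm{disc}(f)$. Hence $\overline{E}$ is smooth, so an elliptic curve with $\overline{E} = \overline{E}_{\mathrm{ns}}$, if and only if $\overline{\Delta} \neq 0$, i.e. $v(\Delta) = 0$.

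For the remaining two bullets, assume $v(\Delta) > 0$, so $\overline{E}$ is singular. The cubic $f(x) = x^3 - 27\overline{c}_4 x - 54\overline{c}_6$ must have a multiple root; since its $x^2$-coefficient is zero, a direct computation (writing $f = (x-\alpha)^2(x-\beta)$ forces $\beta = -2\alpha$ and $\overline{c}_4 = \overline{\alpha}^2/9$) shows that the root is triple exactly when $\overline{c}_4 = 0$, in which case $f(x) = x^3$ and $\overline{E}$ has a cusp at the origin; otherwise it is only double, and $\overline{E}$ has a node. Conversely, the Jacobi-type identity $1728 \Delta = c_4^3 - c_6^2$ shows that $\overline{c}_4 = 0$ together with $v(\Delta) > 0$ automatically forces $\overline{c}_6 = 0$, so the two cases dichotomize cleanly by the single criterion $\overline{c}_4 \stackrel{?}{=} 0$.

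For the group structure on $\overline{E}_{\mathrm{ns}}$ I would simply invoke \cite[Chapter III, Proposition 2.5]{Silv1}: the smooth part of a nodal Weierstrass cubic over any field is isomorphic to $\mathbb{G}_m$, while the smooth part of a cuspidal Weierstrass cubic is isomorphic to $\mathbb{G}_a$. The main obstacle is purely bookkeeping: to verify that the transformation to the short Weierstrass form commutes with reduction modulo $\mathfrak{m}$. This is not automatic for an arbitrary $K$-isomorphism, but it is guaranteed here by Proposition \ref{MinimalModelsBasic}(3), which forces any change of variables between integral minimal models to have coefficients in $R$, so that it descends to $k$ and preserves the invariants $\overline{c}_4, \overline{c}_6, \overline{\Delta}$.
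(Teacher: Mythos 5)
Your proof is correct and takes essentially the same route as the paper, whose entire proof is a one-line citation of \cite[Chapter VII, Proposition 5.1]{Silv1} with the remark that the argument carries over to the non-discrete case; you have supplied the details left to the reader (integrality of the invariant formulas, reduction to the short Weierstrass form over $R$, the cubic discriminant analysis, and the appeal to \cite[Chapter III, Proposition 2.5]{Silv1} for the group structure on $\overline{E}_{\mathrm{ns}}$). One small slip: the relevant part of Proposition~\ref{MinimalModelsBasic} for passing between two \emph{minimal} models (original and short form) and seeing that their reductions are $k$-isomorphic is part (2), with $u\in R^*$, rather than part (3); in any case the paper justifies the short-form reduction directly before Lemma~\ref{CriterionMinimality} by exhibiting the explicit $R$-integral change of variables using that $2$ and $3$ are units.
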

\begin{proof}
The proof of \cite[Chapter VII, Proposition 5.1]{Silv1} still works in the non-discrete case, as one can easily check. We leave the details to the reader. 
\end{proof}

\begin{exa}\label{ExampleMultiplicative}
Let $E$ be the elliptic curve defined by the Weierstrass minimal model
\begin{equation}
y^2=x^3+x^2+\varpi^2,
\end{equation}
where $v(\varpi)>0$. The reduced curve is then given by
\begin{equation}
y^2=x^3+x^2,
\end{equation}
which has the singularity $(0,0)$. Note that this singularity is a node, so $E$ has multiplicative reduction. 
\end{exa}

We now relate the reduction type of an elliptic curve to the valuation of the $j$-invariant. To that end, we will need the following formula:
\begin{equation}
j=\dfrac{c_{4}^{3}}{\Delta}=1728\dfrac{c_{4}^3}{c_{4}^3-c_{6}^2},
\end{equation}
see \cite[Chapter III, Page 42]{Silv1}.

\begin{prop}\label{ReductionTypeInvariantProposition}
Let $E/K$ be an elliptic curve with minimal model $W/R$. Then:
\begin{itemize}
\item $E$ has good reduction if and only if $v(j)\geq{0}$,
\item $E$ has multiplicative reduction if and only if $v(j)<{0}$. 
\end{itemize}
In particular, we see that $E$ cannot have additive reduction.
\end{prop}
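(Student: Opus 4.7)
The plan is to combine three tools already in hand: Lemma \ref{CriterionMinimality}, which says $W/R$ is minimal iff $\min\{v(c_4),v(c_6)\}=0$; Proposition \ref{ReductionType}, which distinguishes the reduction type via $v(\Delta)$ and $\overline{c}_4$ (equivalently $v(c_4)$); and the identity $j=c_4^3/\Delta$, from which one gets the basic relation
\begin{equation}
v(j)=3v(c_4)-v(\Delta).
\end{equation}
From here, the statement should drop out by a short case analysis on the reduction type.

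The main step, and the one I would do first, is to rule out additive reduction under the standing hypothesis $\mathrm{char}(k)\neq 2,3$. Suppose for contradiction that $E$ has additive reduction. By Proposition \ref{ReductionType} this means $v(\Delta)>0$ and $v(c_4)>0$. Lemma \ref{CriterionMinimality} then forces $v(c_6)=0$, since $W/R$ is minimal and $v(c_4)>0$. Now use the identity
\begin{equation}
1728\,\Delta=c_4^{3}-c_6^{2},
\end{equation}
which is legitimate because $v(1728)=0$ under our residue-characteristic assumption. Then $v(c_4^{3})=3v(c_4)>0$ while $v(c_6^{2})=2v(c_6)=0$, so the valuation of the right-hand side is $0$, forcing $v(\Delta)=0$ and contradicting $v(\Delta)>0$. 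This gives the ``in particular'' part of the proposition.

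Having excluded additive reduction, only the good and multiplicative cases remain, and the two equivalences follow from the formula for $v(j)$ above. In the good-reduction case we have $v(\Delta)=0$ and (from minimality) $v(c_4)\ge 0$, so $v(j)=3v(c_4)\ge 0$. In the multiplicative-reduction case we have $v(\Delta)>0$ and $\overline{c}_4\neq 0$, i.e.\ $v(c_4)=0$, so $v(j)=-v(\Delta)<0$. The converses then come for free: since good and multiplicative reduction are the only possibilities and they produce disjoint regimes for $v(j)$, the sign of $v(j)$ pins down the reduction type.

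The only real obstacle is the additive case; everything else is a one-line valuation computation. The assumption $\mathrm{char}(k)\neq 2,3$ enters in an essential way precisely there, through $v(1728)=0$, which is consistent with the remark in the excerpt that additive reduction can genuinely occur in residue characteristics $2$ and $3$.
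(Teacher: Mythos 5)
Your proof is correct, and it rests on exactly the same ingredients the paper uses: Lemma \ref{CriterionMinimality}, Proposition \ref{ReductionType}, the formula $v(j)=3v(c_4)-v(\Delta)$, and the identity $1728\Delta=c_4^3-c_6^2$. The difference is purely organizational. You promote ``no additive reduction'' to a standalone first step, after which the two forward implications (good $\Rightarrow v(j)\ge 0$, multiplicative $\Rightarrow v(j)<0$) are one-line valuation computations and the converses come for free by elimination, since good and multiplicative are the only surviving types and they land in disjoint sign regimes for $v(j)$. The paper instead proves the four implications in sequence: it runs a separate contradiction argument for $v(j)\ge 0\Rightarrow$ good (assume $v(\Delta)>0$, deduce $3v(c_4)=2v(c_6)$, invoke minimality to force $v(c_4)=v(c_6)=0$, conclude $v(j)<0$), and the ``no additive reduction'' observation appears only embedded inside the proof of $v(j)<0\Rightarrow$ multiplicative. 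Your ordering is slightly tighter---it skips the paper's intermediate deduction that $3v(c_4)=2v(c_6)$---and it puts the ``in particular'' clause in its natural logical position rather than as a byproduct, but the underlying mathematics is the same.
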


\begin{proof}
Suppose that $E$ has good reduction. Then $v(\Delta(E))=0$ and consequently $v(j)=3v(c_{4})-v(\Delta)\geq{0}$, as desired. Suppose that $v(j)\geq{0}$ and let $W/R$ be a minimal model of the form
\begin{equation}
y^2=x^3-27c_{4}x-54c_{6}.
\end{equation}
Suppose that $v(\Delta)>0$. Then we must have $3v(c_{4})=2v(c_{6})$.
But by Lemma \ref{CriterionMinimality}, we see that either $v(c_{4})=0$ or $v(c_{6})=0$, so $v(c_{4})=0=v(c_{6})$. But then $v(j)=3v(c_{4})-v(\Delta)<0$, a contradiction. We conclude that $v(\Delta)=0$ and thus $E$ has good reduction.

Suppose now that $v(j)<0$. Then we must have $v(\Delta)>0$. Suppose that $E$ has additive reduction. Then $v(c_{4})>0$ by Proposition \ref{ReductionType} and consequently $v(c_{6})=0$ by Lemma \ref{CriterionMinimality}. But then $v(\Delta)=0$, a contradiction. We conclude that $E$ has multiplicative reduction, as desired. Suppose that $E$ has multiplicative reduction. By what was proved earlier, we cannot have $v(j)\geq{0}$, so we must have $v(j)<0$. This concludes the proof.

\end{proof}

Consider the following subset of $E(K)$:
\begin{equation}
E_{0}(K)=\{P\in{E(K):\overline{P}\in{\overline{E}_{\mathrm{ns}}(k)}}\}.
\end{equation}
By \cite[Chapter VII, Proposition 2.1]{Silv1} (note that the proof only uses the fact that $R$ is Henselian), we find that $E_{0}(K)$ is a subgroup of $E(K)$ and we have an exact sequence 
\begin{equation}\label{ExactSequence}
0\rightarrow{E_{1}(K)}\rightarrow{}E_{0}(K)\rightarrow{}\overline{E}_{\mathrm{ns}}(k)\rightarrow{}0.
\end{equation}
Here $E_{1}(K)$ is the kernel of the reduction map, i.e.
\begin{equation}
E_{1}(K)=\{P\in{E(K)}:\overline{P}=\overline{\mathcal{O}}\},
\end{equation}
where $\mathcal{O}$ is the point at infinity. Note that the projective point $[0,1,0]$ is always nonsingular for any Weierstrass equation (see \cite[Chapter III, Proposition 1.2]{Silv1}), so we have $E_{1}(K)\subseteq{E_{0}(K)}$. Since $E_{0}(K)$ is a subgroup, we can consider the quotient
\begin{equation}
E/E_{0}(K):=E(K)/E_{0}(K).
\end{equation}
A point $P\in{E(K)}$ then gives rise to a nontrivial point in $E/E_{0}(K)$ if and only if $P$ reduces to a singular point. 
\begin{exa}\label{ExampleMinimalModel}
Let $E$ again be the elliptic curve defined by the Weierstrass minimal model
\begin{equation}
y^2=x^3+x^2+\varpi^2,
\end{equation}
where $v(\varpi)>0$. We saw in Example \ref{ExampleMultiplicative} that $E$ has multiplicative reduction. We now give an example of a nontrivial point in $E/E_{0}(K)$, $E_{0}(K)$ and $E_{1}(K)$ respectively.  
Consider the point 
\begin{equation}
P_{1}=(0,\varpi)\in{E(K)}.
\end{equation}
Since $P_{1}$ reduces to the singular point, we see that $P_{1}\notin{E_{0}(K)}$. Let $\alpha$ be a square root of ${2+\varpi^{2}}$ in $K$ and consider the point $P_{2}=(1,\alpha)$. Its reduction is then given by $\overline{P}_{2}=(\overline{1},\overline{\alpha})\in{\overline{E}_{\mathrm{ns}}}$, so we find that $P_{2}\in{E_{0}(K)}$. Lastly, let $\beta$ be a square root of ${1+\varpi^2+\varpi^8}$ and consider the projective point $P_{3}=[\varpi,\beta,\varpi^3]$. In terms of $x$ and $y$ coordinates, this is given by
\begin{equation}
P_{3}=(\dfrac{1}{\varpi^2},\dfrac{\beta}{\varpi^3}).
\end{equation}
Since $\beta\notin\mathfrak{m}$, we find that $P_{3}\in{E_{1}(K)}$.


\end{exa}

We now have two subgroups of $E$ at our disposal: $E_{1}$ and $E_{0}$. We are interested in the torsion structure of these groups. That is, for any abelian group $G$ and integer $n>1$, we consider the subgroup
\begin{equation}
G[n]=\{g\in{G}:{n}\cdot{g}=e\},
\end{equation} 
where $e$ is the identity of $G$. For elliptic curves, we will denote the $n$-torsion subgroup of the $K$-valued points by $E[n](K)$. For $K$ algebraically closed and $n$ coprime to $\mathrm{char}(K)$, we then have  
\begin{equation}
E[n](K)=(\mathbb{Z}/n\mathbb{Z})^{2}
\end{equation}
see \cite[Chapter III, Corollary 6.4]{Silv1}. We are especially interested in the case $n=3$. In this case, the torsion points have a very geometric flavor to them, as the following well-known lemma shows:
\begin{lma}{\bf{[Inflection points]}}
\label{InflectionPoint}
Let $E/K$ be an elliptic curve with a point $P\in{E(K)}$. Then $P\in{E[3](K)}$ if and only if $P$ is an {\it{inflection point}}. That is, the tangent line at $P$ only intersects $E$ at $P$.  
\end{lma}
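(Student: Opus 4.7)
The plan is to exploit the chord–tangent description of the group law on $E$, which states that three points $P,Q,R$ on $E$ (counted with intersection multiplicity) lying on a common line sum to $\mathcal{O}$ in the group law, where $\mathcal{O}=[0,1,0]$ is the point at infinity used as the identity. This is the standard consequence of the Riemann–Roch based description of the group law, together with the fact that a line in $\mathbb{P}^{2}$ meets a smooth cubic in a divisor of degree $3$, see \cite[Chapter III, Proposition 2.2]{Silv1}. I would begin by recalling this fact, then treat the two directions separately.

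For the forward direction, assume $P\in E[3](K)$, so $3P=\mathcal{O}$. Let $L$ be the tangent line to $E$ at $P$, and let $R$ be the residual intersection point, so that the divisor of $L\cap E$ equals $2P+R$. By the chord–tangent rule we have $2P+R=\mathcal{O}$ in the group law, i.e.\ $R=-2P=P$ using $3P=\mathcal{O}$. Hence the intersection divisor is $3P$, meaning the tangent line meets $E$ only at $P$, so $P$ is an inflection point in the sense of the lemma.

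For the converse, assume $P$ is an inflection point, so the tangent line $L$ at $P$ intersects $E$ only at $P$; since $L\cdot E$ has degree $3$, the intersection divisor must be exactly $3P$. Applying the chord–tangent rule to this triple intersection gives $P+P+P=\mathcal{O}$, i.e.\ $3P=\mathcal{O}$, so $P\in E[3](K)$. I expect no genuine obstacles here; the only subtle point is making sure the intersection multiplicity at $P$ on the tangent line is at least $2$ (automatic by definition of tangent line) and that $E$ is smooth at $P$ so that the tangent line is well-defined — but this is built in since $E/K$ is an elliptic curve and therefore smooth. The whole argument is geometric and avoids any dependence on the reduction theory developed earlier in this section.
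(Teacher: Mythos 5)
Your proof is correct and takes essentially the same route as the paper: both rely on Bezout's theorem to get a degree-$3$ intersection divisor for the tangent line and then use the identification of collinearity with the group law (the paper makes this explicit by computing $\mathrm{div}(H/Z)$ in $\mathrm{Pic}^0(E)$, while you invoke the chord--tangent rule, which packages the same divisor calculation). The two arguments differ only in how much of the $\mathrm{Pic}^0$ bookkeeping is displayed.
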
 
\begin{proof}
Suppose that the tangent line is given by $H(X,Y,Z)=\alpha{X}+\beta{Y}+\gamma{Z}=0$ and that it only intersects $E$ at $P$. By Bezout's theorem  applied to $E\subset{\mathbb{P}^{2}}$, it intersects $E$ triply. The divisor of $H/Z$ is then $3(P)-3(\mathcal{O})$ and thus $P$ is a point of order three, as desired. 
Conversely, let $P$ be a point of order three and let $H(X,Y,Z)$ be the tangent line at $P$. 
Then $\mathrm{div}(H/Z)=2(P)+(Q)-3(\mathcal{O})$ for some $Q\in{E(K)}$ and consequently the degree zero divisor $(Q)-(\mathcal{O})$ is the inverse of $2(P)-2(\mathcal{O})$ in $\mathrm{Pic}^{0}(E)$. But this inverse is exactly $(P)-(\mathcal{O})$, so we find $P=Q$, as desired. 
\end{proof}

\begin{exa}\label{ExampleInflectionFamily}
Let $E$ be the elliptic curve defined by the affine (minimal) Weierstrass equation 
\begin{equation}
y^2=x^3+(x-\varpi)^2
\end{equation}

\begin{figure}[h!]
\centering
\includegraphics[width=6cm]{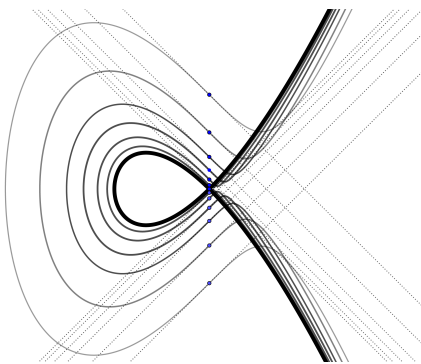}
\caption{\label{PlaatjeDegTrop4} The family of elliptic curves in Example \ref{ExampleInflectionFamily}. The blue points indicate inflection points on the members in this family. The tangent lines at these points intersect the curve only at that inflection point. }

\end{figure}
This contains the rational point $P=(0,\varpi)$ and we claim that this is an inflection point. Indeed, rewriting the equation yields
\begin{equation}
(y-(x-\varpi))(y+(x-\varpi))=x^3.
\end{equation}
We then have that 
\begin{equation}
\mathrm{div}(y+(x-\varpi))=3(P)-3\mathcal{O},
\end{equation}
so $P$ is a three-torsion point because the group structure on $E$ is induced by that on $\mathrm{Pic}^{0}(E)$, see \cite[Chapter III, Proposition 3.4(d)]{Silv1}.  By Lemma \ref{InflectionPoint}, we then see that $P$ is indeed an inflection point. The real picture for this family can be found in Figure \ref{PlaatjeDegTrop4}. Here we evaluated $\varpi$ at real numbers close to zero. A similar family will be used in Section \ref{FinalSection} for the proof of Theorem \ref{MainTheorem}.  

\end{exa}

We will use this characterization of inflections points in Lemma \ref{InflectionPoint}  in the proof of Lemma \ref{ExplicitFamily1} to give an explicit family of elliptic curves with a marked inflection point on each member. This explicit form then allows us to find the desired tropicalization we are after.  

 We now consider the problem of determining how the $n$-torsion of an elliptic curve is distributed over $E_{1}$, $E_{0}$ and the quotient $E/E_{0}$. To that end, we first consider the $n$-torsion of $E_{1}$. 
By \cite[Chapter VII, Proposition 2.2]{Silv1}, there is an isomorphism
\begin{equation}
\hat{E}(\mathfrak{m})\rightarrow{E_{1}(K)},
\end{equation}
where $\hat{E}$ is the formal group associated to $E$ (see \cite[Chapter IV]{Silv1}) and $\mathfrak{m}$ the maximal ideal of $R$. Since the multiplication by $n$ map is invertible on $\hat{E}$ for any $n$ coprime to the residue characteristic, we obtain the following

\begin{prop}\label{NoTorsion}
Let $E/K$ be an elliptic curve and let $n$ be an integer that is coprime to the characteristic of the residue field $k$. Then $E_{1}(K)[n]=(0)$. 
\end{prop}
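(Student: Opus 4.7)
The plan is to reduce the statement to a claim about the formal group $\hat{E}$ by invoking the isomorphism $\hat{E}(\mathfrak{m}) \xrightarrow{\sim} E_{1}(K)$ coming from \cite[Chapter VII, Proposition 2.2]{Silv1}, which is already cited immediately before the statement. Since this isomorphism intertwines the group laws, it restricts to an isomorphism of $n$-torsion subgroups, so it suffices to prove that $\hat{E}(\mathfrak{m})[n]=(0)$.

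For that, I would work at the level of the formal group law. Recall that the multiplication-by-$n$ endomorphism of $\hat{E}$ is given by a power series $[n](T) \in R[[T]]$ with leading term $nT$, i.e.\ $[n](T) = nT + (\text{higher order terms})$. Here is where the hypothesis $\gcd(n,\mathrm{char}(k))=1$ enters: under this assumption $n$ is a unit in $R$, because its reduction in $k$ is a nonzero element of the residue field and $R$ is a valuation ring with residue field $k$. Consequently the leading coefficient of $[n](T)$ is invertible in $R$, so by the standard formal inverse function theorem for power series (see the lemma on formal group homomorphisms in \cite[Chapter IV]{Silv1}) there exists a power series $F(T)\in R[[T]]$ with $F(0)=0$ and $[n]\circ F = F \circ [n] = T$.

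Evaluating this formal inverse on $\mathfrak{m}$ yields a set-theoretic two-sided inverse of $[n]:\hat{E}(\mathfrak{m})\rightarrow \hat{E}(\mathfrak{m})$, so multiplication by $n$ is a bijection on $\hat{E}(\mathfrak{m})$; in particular its kernel $\hat{E}(\mathfrak{m})[n]$ is trivial. Transporting back along the isomorphism $\hat{E}(\mathfrak{m})\simeq E_{1}(K)$ gives $E_{1}(K)[n]=(0)$, as desired. The only mildly delicate point is checking that the formal power series $F(T)$ converges when evaluated at elements of $\mathfrak{m}$, but this is immediate because the coefficients of $F$ lie in $R$ and the arguments lie in $\mathfrak{m}$, so every term has strictly positive valuation and the series converges in the complete local picture (and in any case, it lies in the domain of definition of $\hat{E}(\mathfrak{m})$ by construction).
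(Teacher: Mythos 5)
Your proof takes essentially the same route as the paper's: both reduce to showing $\hat{E}(\mathfrak{m})[n]=(0)$ via the isomorphism $\hat{E}(\mathfrak{m})\simeq E_{1}(K)$, and both conclude by observing that multiplication by $n$ is invertible on the formal group since $n\in R^{*}$. The paper simply cites \cite[Chapter IV, Proposition 2.3(b)]{Silv1} for that invertibility, whereas you reprove it directly via the formal inverse function theorem applied to $[n](T)=nT+O(T^{2})$ --- a harmless unrolling of the same step.
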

\begin{proof}
By \cite[Chapter IV, Proposition 2.3.b]{Silv1}, the multiplication by $n$-map $\hat{E}\rightarrow{\hat{E}}$ is an isomorphism. This directly implies that $\hat{E}(\mathfrak{m})[n]=(0)$, as desired.

\end{proof} 

\begin{exa}
Consider again the elliptic curve defined by
\begin{equation}
y^2=x^3+x^2+\varpi^{2}.
\end{equation}
We saw in Example \ref{ExampleMinimalModel} that $P_{3}=(\dfrac{1}{\varpi^{2}},\dfrac{\beta}{\varpi^{3}})\in{E_{1}(K)}$, where $\beta^2=1+\varpi^{2}+\varpi^{8}$. If we assume that $\mathrm{char}(k)=0$, then using Proposition \ref{NoTorsion}, we see that $P_{3}$ cannot be a torsion point, so $P_{3}$ has infinite order. This trick can be used more generally to create points of infinite order on families of elliptic curves.  
\end{exa}

\begin{lma}\label{TorsionPointReduction}
Suppose that $E$ has multiplicative reduction with singular point $x\in{\overline{E}(k)}$ and let $n$ be coprime to the characteristic of the residue field. Then there exists a point $P\in{E(K)}$ of order $n$ such that $\overline{P}=x$.
\end{lma}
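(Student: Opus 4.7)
The plan is to exploit the exact sequence from Equation \ref{ExactSequence}, namely $0 \to E_1(K) \to E_0(K) \to \overline{E}_{\mathrm{ns}}(k) \to 0$, together with the fact that $E[n](K) \cong (\mathbb{Z}/n\mathbb{Z})^2$ is ``too big'' to fit inside $E_0(K)[n]$. The surplus $n$-torsion is then forced to reduce to the singular point $x$, which by the remarks preceding Proposition \ref{ReductionType} is the unique singularity of $\overline{E}$.

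Concretely, I would first compute $E_0(K)[n]$ by applying the snake lemma to multiplication by $n$ on the above exact sequence. Since $n$ is coprime to $\mathrm{char}(k)$ it is a unit in $R$, so $[n]$ is an automorphism of the formal group $\hat{E}$ by \cite[Chapter IV, Proposition 2.3.b]{Silv1}, and hence of $\hat{E}(\mathfrak{m}) \cong E_1(K)$. This gives both $E_1(K)[n] = 0$ (cf.\ Proposition \ref{NoTorsion}) and $E_1(K)/nE_1(K) = 0$. Since $E$ has multiplicative reduction, $\overline{E}_{\mathrm{ns}}(k) \cong k^*$, and as $k$ is algebraically closed (this is inherited from $K$ being algebraically closed via integrality of $R$ in $K$), $k^*$ is $n$-divisible with $k^*[n] = \mu_n(k) \cong \mathbb{Z}/n\mathbb{Z}$. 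The snake lemma then yields an isomorphism
\[ E_0(K)[n] \xrightarrow{\sim} \overline{E}_{\mathrm{ns}}(k)[n] \cong \mathbb{Z}/n\mathbb{Z}. \]

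Next, since $K$ is algebraically closed and $n$ is coprime to $\mathrm{char}(K)$, we have $E[n](K) \cong (\mathbb{Z}/n\mathbb{Z})^2$ by \cite[Chapter III, Corollary 6.4]{Silv1}. Thus $E_0(K)[n]$ sits inside $E[n](K)$ as a cyclic subgroup of order $n$. A generator $h=(a,b) \in (\mathbb{Z}/n\mathbb{Z})^2$ of such a subgroup necessarily satisfies $\gcd(a,b,n) = 1$, so by Bezout $h$ extends to a $\mathbb{Z}/n\mathbb{Z}$-basis of $E[n](K)$, whence the quotient $E[n](K)/E_0(K)[n]$ is cyclic of order exactly $n$. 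Choosing any $P \in E[n](K)$ whose image generates this quotient provides the desired point: $P \notin E_0(K)$, so $\overline{P}$ is a singular point of $\overline{E}$ and therefore coincides with the unique singular point $x$; and the order of $P$ divides $n$ but is at least the order $n$ of its image in the quotient, so $P$ has exact order $n$.

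The main obstacle is the snake lemma step, specifically the vanishing $E_1(K)/nE_1(K) = 0$: it is this cokernel that prevents $E_0(K)[n] \to \overline{E}_{\mathrm{ns}}(k)[n]$ from being merely injective, and its vanishing relies crucially on $[n]$ being an automorphism of $\hat{E}$, hence on $n$ being a unit in $R$. Once that input is available, the rest is elementary linear algebra over $\mathbb{Z}/n\mathbb{Z}$ together with facts about the reduction map already established in this section.
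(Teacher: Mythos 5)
Your proof is correct and rests on the same essential inputs as the paper: $E_1(K)[n]=0$ (Proposition \ref{NoTorsion}), $\overline{E}_{\mathrm{ns}}(k)[n]\cong k^*[n]\cong\mathbb{Z}/n\mathbb{Z}$, and $E[n](K)\cong(\mathbb{Z}/n\mathbb{Z})^2$. The difference is in how the conclusion is extracted. The paper argues by contradiction and a cardinality count: if every $n$-torsion point reduced to a nonsingular point, then $E[n](K)\subset E_0(K)$ would inject into $\overline{E}_{\mathrm{ns}}(k)$, and $n^2>n$ gives the contradiction. You instead apply the snake lemma to multiplication by $n$ on the exact sequence (\ref{ExactSequence}), using the additional vanishing $E_1(K)/nE_1(K)=0$ (which, like $E_1(K)[n]=0$, follows from $[n]$ being an automorphism of $\hat E$) to upgrade the injection $E_0(K)[n]\hookrightarrow\overline{E}_{\mathrm{ns}}(k)[n]$ to an isomorphism, and then exhibit $P$ directly as a preimage of a generator of the cyclic order-$n$ quotient $E[n](K)/E_0(K)[n]$. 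This buys you something the paper elides: the paper's proof produces a point of order \emph{dividing} $n$ outside $E_0(K)$ and does not explain why one can take the order to be exactly $n$ (this can be repaired with a short argument, but it is not written), whereas in your construction the exact order $n$ comes for free, since $\mathrm{ord}(P)$ divides $n$ and is bounded below by the order $n$ of its image in the quotient. Both arguments are sound; yours is the more precise version of the same idea.
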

\begin{proof}
Suppose that every point $P$ of order $n$ of $E$ reduces to a nonsingular point. Then $E[n](K)\subset{E_{0}(K)}$. By Proposition \ref{NoTorsion} and the exact sequence from Equation \ref{ExactSequence}, we see that $E[n](K)$ injects into $\overline{E}_{\mathrm{ns}}(k)$ under the reduction map. 
But this is impossible: $\overline{E}_{\mathrm{sm}}[n]\simeq{k^{*}[n]\simeq{\mathbb{Z}/n\mathbb{Z}}}$ has order $n$, whereas $E[n](K)\simeq{(\mathbb{Z}/n\mathbb{Z})^{2}}$ has order $n^2$. We conclude that there exists a $P$ of order $n$ reducing to the singular point, as desired.  
\end{proof}

\begin{exa}
Let $E$ be the curve in Example \ref{ExampleInflectionFamily}. The inflection point $P=(0,\varpi)$ then reduces to the singular point $(0,0)$ on the reduction $y^2=x^3+x^2$. We thus see that $P$ defines a nontrivial element in $E/E^{0}(K)[3]$, as guaranteed by Lemma \ref{TorsionPointReduction}. 
\end{exa}

\begin{rem}
Let us assume that $K$ is complete. Using the analytic uniformization theorem for elliptic curves with split multiplicative reduction (see Equation \ref{UniformizationEquation}), it is now much easier to obtain the point $P$ in this Lemma. Indeed, for any such elliptic curve $E/K$ with $v(j(E))<0$, one considers the analytic isomorphism 
\begin{equation}
E(K)\rightarrow{}K^{*}/\langle{q}\rangle,
\end{equation}
where $q\in{K}$ is such that $-v(j(E))=v(q)>0$. To find the point $P$ as in Lemma \ref{TorsionPointReduction}, one simply takes $P=q^{1/n}$. We invite the reader to compare this to the material in Section \ref{FaithfulTropSection} and the construction in \cite[Theorem 6.2]{BPR11}. 
\end{rem}

\section{Creating numerically faithful tropicalizations using minimal models}\label{FinalSection}

In this section, we will show that any elliptic curve $E/K$ with $v(j(E))<0$ admits a numerically faithful tropicalization as in Definition \ref{NumericallyFaithfulDefinition}. Using the criterion in \cite[Corollary 5.28(2)]{BPR11} and Example \ref{FaithfulTrop3}, we then also see that this particular embedding defines a faithful tropicalization. 
 To find this numerically faithful tropicalization, we use a three torsion point $P$ that reduces to the singular point of the reduced curve corresponding to a Weierstrass minimal model, which exists by Lemma \ref{TorsionPointReduction}. We then construct two principal divisors $\mathrm{div}(f),\mathrm{div}(g)\in{\mathrm{Prin}(E)}$ using this torsion point. These two principal divisors then give rise to a closed embedding 
\begin{equation}
E\rightarrow{\mathbb{P}^{2}},
\end{equation}
whose
tropicalization is easily shown to contain a cycle of length $-v(j(E))$, which concludes the proof. The idea of the proof is mostly based on the construction in \cite[Theorem 6.2]{BPR11}. In that proof, they abstractly show using the Poincar\'{e}-Lelong formula that the corresponding piecewise linear functions $-\mathrm{log}|f|$ and $-\mathrm{log}|g|$ separate the points of the Berkovich minimal skeleton of $E$. 

Let $E$ be an elliptic curve with $v(j(E))<0$. By Proposition \ref{ReductionTypeInvariantProposition}, this implies that $E$ has multiplicative reduction. By Lemma \ref{TorsionPointReduction}, there exists a $P\in{E[3](K)}$ such that $P$ reduces to the singular point of a minimal Weierstrass model. In other words, the class of $P$ in $E/E_{0}(K)$ is nontrivial. Let $W$ be a minimal Weierstrass model for $E$ of the form
\begin{equation}
y^2=x^3+b_{2}x^2+b_{4}x+b_{6}.
\end{equation}
Since $P$ reduces to the singular point on $\overline{E}$, we see in particular that $v(x(P))\geq{0}$. The transformation
$x\mapsto{x-x(P)}$ then transforms $W$ into another {\it{integral}} Weierstrass model, which is again minimal by Proposition \ref{MinimalModelsBasic}. We will again denote this minimal Weierstrass model by $W$. In this new model, we have $x(P)=0$. 
\begin{lma}\label{ExplicitFamily1}
Let $E$, $W$ and $P$ be as above. Then $\Delta(b_{2}x^2+b_{4}x+b_{6})=b_{4}^2-4b_{2}b_{6}=0$. In other words, we can write 
\begin{equation}
y^2=x^3+a(x-b)^2
\end{equation}
for $a$ and $b$ in $R$.
\end{lma}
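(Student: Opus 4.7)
The strategy is to convert the three-torsion hypothesis on $P$ into a geometric statement about the tangent line to $W$ at $P$ via Lemma~\ref{InflectionPoint}, then extract $b_{4}^{2}=4b_{2}b_{6}$ by explicit intersection.

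First I would rule out $y(P)=0$: on a short Weierstrass model the involution $(x,y)\mapsto(x,-y)$ is negation, so $y(P)=0$ would force $P=-P$, hence $2P=\mathcal{O}$, contradicting that $P$ has exact order three. Writing $y_{0}:=y(P)$, we therefore have $y_{0}\neq 0$ and $y_{0}^{2}=b_{6}$.

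Next I would compute the tangent line $L$ at $P=(0,y_{0})$ by implicit differentiation, obtaining slope $b_{4}/(2y_{0})$ and thus
\[
L:\quad y=\frac{b_{4}x+2b_{6}}{2y_{0}}.
\]
The projective form of $L$ is $-b_{4}X+2y_{0}Y-2b_{6}Z=0$, which misses $\mathcal{O}=[0,1,0]$ because $y_{0}\neq 0$. By Lemma~\ref{InflectionPoint} the intersection of $L$ with $W$ is supported at $P$, and by B\'ezout it has total multiplicity three, all lying in the affine chart. Substituting $L$ into $W$ and clearing denominators using $y_{0}^{2}=b_{6}$ gives $(b_{4}x+2b_{6})^{2}=4b_{6}(x^{3}+b_{2}x^{2}+b_{4}x+b_{6})$, which after cancellation simplifies to $x^{2}\bigl(4b_{6}x+4b_{2}b_{6}-b_{4}^{2}\bigr)=0$. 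For $x=0$ to be the sole root of this cubic with multiplicity three, the linear factor must also vanish at $x=0$, yielding $b_{4}^{2}-4b_{2}b_{6}=0$.

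The vanishing discriminant lets me factor $b_{2}x^{2}+b_{4}x+b_{6}=a(x-b)^{2}$ with $a=b_{2}$ and $b=-b_{4}/(2b_{2})$. To conclude I still need $a,b\in R$. Since $\bar{P}=(0,0)$ is the singular point of the reduction $\bar{W}$ and $E$ has multiplicative (and not additive) reduction by Proposition~\ref{ReductionTypeInvariantProposition}, this singular point must be a node; hence the quadratic part $\bar{y}^{2}-\bar{b}_{2}\bar{x}^{2}$ of the defining equation at $(0,0)$ must split into two distinct linear factors over $\bar{k}$, forcing $\bar{b}_{2}\neq 0$. Thus $a=b_{2}\in R^{*}$ and hence $b=-b_{4}/(2b_{2})\in R$ (using that $b_{4}\in R$). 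The main subtlety is this last integrality step, which crucially uses that $v(j(E))<0$ forces multiplicative (rather than additive) reduction.
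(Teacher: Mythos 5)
Your proof follows essentially the same route as the paper's: you use Lemma~\ref{InflectionPoint} to identify $P$ as an inflection point, compute the tangent line $y=(b_{4}x+2b_{6})/(2y_{0})$ (ruling out $y_{0}=0$ exactly as the paper does), substitute into the Weierstrass equation, and read off $b_{4}^{2}=4b_{2}b_{6}$ from the triple root at $x=0$. The one thing you do that the paper's proof of this lemma does not is actually justify that $a,b\in R$: the paper merely asserts "for some $a$ and $b$ in $R$" and only later, in Lemma~\ref{LemmaValuation}, uses the multiplicative-reduction hypothesis to show $v(a)=0$, from which integrality of $b$ follows. Your observation that $\bar{b}_{2}\neq 0$ because the singular point of $\bar{W}$ is a node (so that $a=b_{2}\in R^{*}$ and hence $b=-b_{4}/(2b_{2})\in R$) fills this small gap cleanly and in the same spirit as the paper's Lemma~\ref{LemmaValuation}.
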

\begin{proof}
Let $P=(0,y(P))$ be the $3$-torsion point reducing to the singular point on $\overline{E}$ as above.  By Lemma \ref{InflectionPoint}, it is an inflection point: its tangent line intersects $E$ only at $P$.  
The tangent line at $P$ is given by the equation
\begin{equation}
\dfrac{\partial{f}}{\partial{x}}(P)\cdot{x}+\dfrac{\partial{f}}{\partial{y}}(P)(y-y(P))=0,
\end{equation}
where $f=y^2-(x^3+b_{2}x^2+b_{4}x+b_{6})$. 
We thus obtain
\begin{equation}
-b_{4}x+2y(P)(y-y(P))=0. 
\end{equation}
In terms of $y$, we obtain
\begin{equation}\label{Expression}
y=\dfrac{2b_{6}+b_{4}x}{2y(P)}.
\end{equation}
Note here that $y(P)\neq{0}$, since otherwise $P$ would be a $2$-torsion point. 

Squaring the last expression and equating it to $x^3+b_{2}x^2+b_{4}x+b_{6}$, we obtain the cubic equation
\begin{equation}
x^3+(b_{2}-\dfrac{b_{4}^{2}}{4b_{6}})x^{2}+h(x)=0,
\end{equation}
where $h(x)$ is some linear polynomial. Since $x=0$ is a triple root of this equation, we must have $b_{2}-\dfrac{b_{4}^{2}}{4b_{6}}=0$, or in other words:
\begin{equation}
b_{4}^2=4b_{2}b_{6}.
\end{equation}
This means that the discriminant of the quadratic form $b_{2}x^2+b_{4}x+b_{6}$ is zero and we can thus write it as $a(x-b)^{2}$ for some $a$ and $b$ in $R$. This concludes the proof of the lemma. 
\end{proof}

\begin{lma}\label{LemmaValuation}
Let $E$, $W$ and $P$ be as above. Then $v(a)=0$ and $v(b)>0$.
\end{lma}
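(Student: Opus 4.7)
My plan is to extract both claims from a direct computation of the Weierstrass invariants, combined with the hypothesis that $P$ reduces to the singular point of $\overline{E}$. First I would expand the equation as $y^2 = x^3 + ax^2 - 2abx + ab^2$ and apply the formulas from \cite[Chapter III, Page 42]{Silv1} (with $a_1 = a_3 = 0$, $a_2 = a$, $a_4 = -2ab$, $a_6 = ab^2$) to obtain
\begin{align*}
c_4 &= 16a(a+6b),\\
c_6 &= -32a(2a^2+18ab+27b^2),\\
\Delta &= -16a^2 b^3 (4a+27b).
\end{align*}
Since $a, b \in R$, each of $v(c_4)$ and $v(c_6)$ is bounded below by $v(a) \geq 0$. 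But $W$ is a minimal Weierstrass model, so Lemma \ref{CriterionMinimality} forces $\min\{v(c_4),v(c_6)\} = 0$; this can happen only when $v(a) = 0$.

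For the second claim, I would invoke the hypothesis that $P = (0, y(P))$ reduces to the singular point of $\overline{E}$. With $v(a) = 0$ in hand, the reduction of $W$ is $\overline{W}: y^2 = x^3 + \overline{a}(x-\overline{b})^2$ with $\overline{a} \in k^{*}$. The singular point of $\overline{W}$ must have $x$-coordinate $\overline{x(P)} = 0$, and differentiating the defining equation in $x$ yields $3x^2 + 2\overline{a}(x-\overline{b}) = 0$. Evaluating at $x=0$ gives $-2\overline{a}\overline{b} = 0$, and since $\mathrm{char}(k) \neq 2$ and $\overline{a} \neq 0$, we conclude $\overline{b} = 0$, i.e.\ $v(b) > 0$.

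The argument is essentially a bookkeeping exercise, so I do not expect any serious conceptual obstacle. The only points deserving a short verification are that the expanded coefficients $a_2, a_4, a_6$ actually lie in $R$ (so Lemma \ref{CriterionMinimality} applies to $W$) and that the whole derivation is purely valuation-theoretic, so that discreteness of the valuation plays no role; both are immediate from $a, b \in R$. I note that one could alternatively try to use multiplicative reduction ($v(\Delta) > 0$) to extract $v(b) > 0$ from the formula for $\Delta$, but this does not suffice since $v(\Delta) = 3v(b) + v(4a+27b)$ could in principle be forced positive by the second summand alone; the localization of $P$ at $x=0$ is what truly pins down $\overline{b} = 0$.
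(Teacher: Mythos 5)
Your proof is correct, and its two halves compare slightly differently with the paper's. For $v(b)>0$ you use essentially the same argument as the paper: $\overline{P}$ is the singular point, so a partial derivative of the reduced equation must vanish, yielding $\overline{a}\,\overline{b}=0$ and hence $\overline{b}=0$. (The paper takes the $y$-partial, you take the $x$-partial; both give the same conclusion.)

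For $v(a)=0$ you take a genuinely different route. The paper argues via reduction types: if $v(a)>0$ the reduced curve would be $y^2=x^3$, forcing additive reduction (Proposition~\ref{ReductionType}) and contradicting the multiplicative reduction that $v(j(E))<0$ guarantees (Proposition~\ref{ReductionTypeInvariantProposition}). You instead compute $c_4=16a(a+6b)$ and $c_6=-32a(2a^2+18ab+27b^2)$ explicitly, note that both have $a$ as a factor with the cofactor in $R$ (and that $16,32\in R^{*}$ since $\mathrm{char}(k)\neq 2$), so $\min\{v(c_4),v(c_6)\}\geq v(a)$, and then invoke Lemma~\ref{CriterionMinimality}. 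Your version is more computational but self-contained: it needs only minimality, whereas the paper's needs the multiplicative-reduction hypothesis and its classification of reduction types. Both are correct; the paper's reads more conceptually while yours extracts the conclusion with fewer prior propositions. Your closing remark --- that $v(\Delta)=2v(a)+3v(b)+v(4a+27b)>0$ alone cannot give $v(b)>0$ because the third summand might carry all the positivity --- is exactly right and correctly identifies why the singular-point hypothesis is indispensable.
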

\begin{proof}
If $v(a)>0$, then $E$ has additive reduction since the reduced curve is given in this case by $y^2=x^3$. This contradicts our assumption that $E$ has multiplicative reduction and we thus see that $v(a)=0$. For the second part, we will use our assumption that $P$ reduces to the singular point of $\overline{E}$. Let $a'$ be a square root of $a$. We then have 
\begin{equation}
P=(0,a'b) \text{ or }P=(0,-a'b).
\end{equation}
Without loss of generality, we can assume that $P=(0,a'b)$. Since $P$ reduces to the singular point on $\overline{E}$, we must have
\begin{equation}
(\dfrac{\partial{f}}{\partial{x}}(\overline{P}),\dfrac{\partial{f}}{\partial{y}}(\overline{P}))=(0,0),
\end{equation}
where $f=y^2-x^3-\overline{a}(x-\overline{b})^{2}\in{k[x,y]}$. But then $\overline{a'b}=0$ and consequently $\overline{b}=\overline{0}$, since $\overline{a'}\neq{\overline{0}}$ by $v(a)=0$. This proves that $v(b)>0$, as desired. 
\end{proof}

Let $W$ be given by
\begin{equation}
y^2=x^3+a(x-b)^2.
\end{equation}
We can rewrite this as
\begin{equation}
(y-a'(x-b))(y+a'(x-b))=x^3.
\end{equation}
We then quickly see that $\mathrm{div}(y-a'(x-b))=3P'-3\mathcal{O}$ and $\mathrm{div}(y+a'(x-b))=3P-3\mathcal{O}$, where $P'=(0,-a'b)$. Another calculation then shows that
\begin{equation}
\mathrm{div}(x)=P+P'-2\mathcal{O}.
\end{equation}
We now explicitly give the two principal divisors $f$ and $g$ that will give the desired embedding into $\mathbb{P}^{2}$ . We take
\begin{align}\label{PrincipalDivisor1}
f&=\dfrac{x^2}{y-a'(x-b)}\\
g&=\dfrac{x^2}{y+a'(x-b)}.\label{PrincipalDivisor2}
\end{align}
Using the above identities for the divisors of $x$ and $y\pm{a'(x-b)}$, we then obtain
\begin{align*}
\mathrm{div}(f)&=2P-P'-\mathcal{O}\\
\mathrm{div}(g)&=2P'-P-\mathcal{O}.
\end{align*}
We now explicitly calculate the closed embedding induced by $(f,g)$.

\begin{lma}\label{EmbeddingLemma}
Let $E$ be as above and let $f$ and $g$ be as in Equations \ref{PrincipalDivisor1} and \ref{PrincipalDivisor2}. Then the image in $\mathbb{P}^{2}$ of the embedding induced by $f,g$ is cut out by the affine equation
\begin{equation}
f^2g+2a'fg-fg^2-2a'b=0.
\end{equation}
\end{lma}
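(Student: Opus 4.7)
The plan is to verify the stated identity by a direct algebraic manipulation, exploiting the factorization of $y^2 - a(x-b)^2$ afforded by the Weierstrass equation. Introducing the shorthand $u = y - a'(x-b)$ and $v = y + a'(x-b)$, the crucial observation is that
\[
uv \;=\; y^2 - a'^2(x-b)^2 \;=\; y^2 - a(x-b)^2 \;=\; x^3
\]
on $E$, by definition of $a'$ and the Weierstrass equation. From this factorization everything follows.

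First I would compute the product:
\[
fg \;=\; \frac{x^{2}}{u}\cdot\frac{x^{2}}{v} \;=\; \frac{x^4}{uv} \;=\; \frac{x^4}{x^3} \;=\; x.
\]
Next, since $v - u = 2a'(x-b)$, I would compute
\[
f - g \;=\; \frac{x^{2}(v-u)}{uv} \;=\; \frac{2a'(x-b)}{x}.
\]
Multiplying this identity by $fg = x$ and replacing $x$ by $fg$ on the right-hand side yields $(f-g)\,fg = 2a'(fg - b)$, which, after collecting all terms on one side, produces the affine polynomial relation claimed in the lemma (up to overall sign/convention, essentially $fg(f-g) \pm 2a'(fg-b) = 0$). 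This shows that the image of the map $(f,g)\colon E \to \mathbb{P}^{2}$ lies on the affine cubic cut out by the stated equation.

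To upgrade ``lies on'' to ``is cut out by'', I would argue that the map $(f,g)$ is in fact a closed immersion. Both $f$ and $g$ are global sections of $\mathcal{O}_{E}(D)$ for $D = P + P' + \mathcal{O}$ (their common pole divisor, extracted from the explicit principal divisors computed just before the lemma statement), together with $1$; since $\deg D = 3 > 2$, the linear system $|D|$ is very ample by \cite[Chapter~7, Proposition~4.4(b)]{liu2}, so the induced morphism $E \to \mathbb{P}^{2}$ is a closed embedding onto a plane cubic. Any cubic in $\mathbb{P}^{2}$ containing this image must therefore equal its defining equation, so the projective homogenization of the relation above cuts out the image exactly.

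The main obstacle is really just careful bookkeeping of the signs, and making sure that the affine equation extends to the correct homogeneous cubic in $\mathbb{P}^{2}$ (so that one can conclude a degree match with the image of a very ample degree-$3$ linear system). The ``hard'' algebraic content is concentrated entirely in the single factorization $(y-a'(x-b))(y+a'(x-b)) = x^3$; once this is recognized, the rest is formal.
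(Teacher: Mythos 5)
Your proof is correct and takes essentially the same route as the paper's, both resting on the factorization $(y-a'(x-b))(y+a'(x-b))=x^3$ and the consequence $fg=x$; where the paper solves $y = fg^2 + a'(fg-b)$ and substitutes into $y+a'(x-b)=f^2g$, you equivalently compute $f-g = 2a'(x-b)/x$ and multiply by $fg=x$, which is the same algebra reorganized. Two small remarks: first, both your derivation and the paper's own proof yield $f^2g - fg^2 - 2a'fg + 2a'b = 0$, whose $2a'$-terms have the opposite sign from the equation displayed in the lemma statement (evidently a typo there, harmless for the tropicalization since only the valuations of the coefficients matter); second, your observation that $(1:f:g)$ realizes the complete linear system of the degree-$3$ divisor $D = P + P' + \mathcal{O}$, hence a closed embedding onto a plane cubic, correctly supplies the ``is cut out by'' (as opposed to merely ``lies on'') step that the paper's proof leaves implicit.
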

\begin{proof}
First note that $fg=x$, by virtue of 
\begin{equation}
(y-a'(x-b))(y+a'(x-b))=x^3.
\end{equation} We now express $y$ in terms of $f$ and $g$. Note that $(y-a'(x-b))\cdot{f}=x^2$, so
\begin{equation}
y=\dfrac{x^2}{f}+a'(x-b)=fg^2+a'(fg-b).
\end{equation}
Plugging in $x$ and $y$ in $y+a'(x-b)=\dfrac{x^2}{g}=f^2g$, we obtain
\begin{equation}
y+a'(x-b)=fg^2+a'(fg-b)+a'(fg-b)=f^2g
\end{equation} 
and thus
\begin{equation}
fg^2+2a'fg-2a'b-f^2g=0,
\end{equation}
as desired.

\end{proof}

We are now ready to prove the main theorem of this paper.

\begin{theorem}{{\bf{(Main Theorem)}}}

Let $E$ be an elliptic curve over $K$ with $v(j(E))<0$. Then there exists an embedding $\phi:E\rightarrow{\mathbb{P}^{2}}$ such that its tropicalization contains a cycle of length $-v(j(E))$. 
\end{theorem}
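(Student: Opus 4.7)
The plan is to stitch together the technical lemmas developed in Sections \ref{ModelsSection} and the current section, and then perform two explicit computations: one to identify the cycle in the tropicalization, and one to evaluate $v(j(E))$ in terms of the parameter $b$ appearing in the Weierstrass form of Lemma \ref{ExplicitFamily1}.

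First I would invoke Proposition \ref{ReductionTypeInvariantProposition} to deduce that $E$ has multiplicative reduction, and then use Lemma \ref{TorsionPointReduction} with $n=3$ together with the inflection point characterization in Lemma \ref{InflectionPoint} to produce an inflection point $P\in E[3](K)$ that reduces to the singular point of any minimal Weierstrass model. Translating $x\mapsto x-x(P)$ yields a minimal model of the form $y^2=x^3+a(x-b)^2$ by Lemma \ref{ExplicitFamily1}, with $v(a)=0$ and $v(b)>0$ by Lemma \ref{LemmaValuation}. Lemma \ref{EmbeddingLemma} then supplies the rational functions $f,g\in K(E)^\times$ of Equations \ref{PrincipalDivisor1}--\ref{PrincipalDivisor2} and shows that the corresponding morphism $E\to\mathbb{P}^2$ has affine image cut out by
\begin{equation}
h := f^2g + 2a'fg - fg^2 - 2a'b = 0.
\end{equation}
That $(f,g)$ is in fact a closed embedding follows from the standard fact that $\mathrm{div}(f)+\mathcal{O}$ and $\mathrm{div}(g)+\mathcal{O}$ are effective of degree $3>2$, so the associated linear system is very ample on $E$.

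Next I would tropicalize $h$ directly. Since $\mathrm{char}(k)\neq 2$, we have $v(2)=0$, and since $v(a)=0$ we also have $v(a')=0$, so the tropical Laurent polynomial is
\begin{equation}
\mathrm{trop}(h)\;=\;\min\{\,2F+G,\;F+G,\;F+2G,\;v(b)\,\},
\end{equation}
where $F,G$ are the tropical coordinates. This is exactly the tropical polynomial of Example \ref{MainExample} with $k=v(b)$, so by the fundamental theorem (Theorem \ref{FundTrop}) the tropicalization $\mathrm{trop}(V(h))$ contains a triangle with vertices $(0,0)$, $(v(b),0)$, $(0,v(b))$. Each of these three edges has primitive integer direction $(1,0)$, $(0,1)$ or $(1,-1)$ respectively, so by Definition \ref{LatticeLength} each edge has lattice length $v(b)$, giving a cycle of total length $3v(b)$ in the sense of Definition \ref{Cycle}.

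Finally I would compute $v(j(E))$ from the Weierstrass form $y^2=x^3+ax^2-2abx+ab^2$. Using Silverman's standard formulas one finds $b_8=4\cdot a\cdot ab^2-(-2ab)^2=0$, so
\begin{equation}\label{JinvariantEquation}
\Delta \;=\; -8b_4^3-27 b_6^2+9b_2 b_4 b_6 \;=\; -16\,a^2 b^3 (4a+27b),
\end{equation}
while $c_4=16 a(a+6b)$. Since $v(a)=0$ and $v(b)>0$ we have $v(a+6b)=v(4a+27b)=0$, hence $v(c_4)=0$ and $v(\Delta)=3v(b)$, so
\begin{equation}
v(j(E)) \;=\; 3v(c_4)-v(\Delta) \;=\; -3v(b).
\end{equation}
Combined with the previous paragraph, the cycle in $\mathrm{trop}(V(h))$ has length $3v(b)=-v(j(E))$, which is the statement of the theorem. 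The main obstacle is the bookkeeping in the two parallel computations: one must check that none of the four tropical terms is ever dominated globally (so that the whole triangle actually appears rather than being cut off by an extra contracted cell), and that no spurious cancellation occurs among the coefficients of $\Delta$ in characteristic zero residue, both of which follow cleanly from the hypothesis $v(a)=0<v(b)$ together with $\mathrm{char}(k)\neq 2,3$.
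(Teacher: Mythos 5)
Your proposal follows the paper's own proof essentially step for step: Proposition \ref{ReductionTypeInvariantProposition} for multiplicative reduction, Lemma \ref{TorsionPointReduction} for the three-torsion point at the singular point, the translation $x\mapsto x-x(P)$ and Lemmas \ref{ExplicitFamily1}--\ref{EmbeddingLemma} to arrive at the model $y^2=x^3+a(x-b)^2$ and the plane equation $f^2g+2a'fg-fg^2-2a'b=0$, the same tropical triangle of lattice length $3v(b)$, and the same conclusion $v(j(E))=-3v(b)$ (your presentation via $v(c_4)=0$, $v(\Delta)=3v(b)$ is an equivalent and arguably cleaner way to read off the valuation than substituting into the closed-form $j$-formula as the paper does).

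One small slip in the side remark justifying that $(f,g)$ gives a closed embedding: $\mathrm{div}(f)+\mathcal{O}=2P-P'$ is neither effective nor of degree $3$. The correct statement is that $1,f,g$ are global sections of $\mathcal{O}_E(D)$ where $D=P+P'+\mathcal{O}$ is the common polar divisor, which is effective of degree $3>2$ and hence very ample; this is the same argument the paper spells out for the auxiliary embedding in Example \ref{NonFaithfulExample}, and it fixes your remark without changing the rest of the proof.
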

\begin{proof}
We consider the embedding from Lemma \ref{EmbeddingLemma}. By Lemma \ref{LemmaValuation}, we have that $v(a)=0$ and $v(b)>0$. We then see that the tropicalization is given by the tropical polynomial
\begin{equation}
\mathrm{trop}(fg^2+2a'fg-2a'b-f^2g)=\mathrm{min}\{f+2g,f+g,v(b),2f+g\}.
\end{equation}
This tropicalization contains a cycle of length $3v(b)$, see Example \ref{MainExample}. We now calculate the $j$-invariant of $E$ using the formulas on \cite[Page 45]{Silv1}. This gives 
\begin{equation}\label{JinvariantEquation}
j(E)=\dfrac{-256\cdot{}a(a+6b)^3}{4ab^3 + 27b^4}
\end{equation}
Since $v(a)=0$ is strictly less than $v(6\cdot{}b)=v(b)$, we have that $v(a+6b)=0$. Therefore, the numerator of $j(E)$ has valuation zero. Similarly, we find that the valuation of the denominator is $3v(b)$. This is the length of the cycle in the tropicalization of $\phi$, so we see that this embedding is numerically faithful. 
\end{proof}

\bibliographystyle{alpha}
\bibliography{bibfiles}{}

\end{document}